\documentclass{amsart}
\usepackage{amsmath,amsfonts,amsthm}
\usepackage{amssymb,latexsym}
\usepackage{graphics}
\usepackage{rotating}
\usepackage[colorlinks]{hyperref}
\usepackage[all]{xypic}
\usepackage{amssymb}
\usepackage{xspace, enumerate}
\theoremstyle{plain}
\theoremstyle{definition}

\theoremstyle{remark}
\numberwithin{equation}{section}

\theoremstyle{definition}
\newtheorem{dfn}{Definition}
\newtheorem{trm}{Theorem}
\newtheorem{lem}[trm]{Lemma}
\newtheorem{prp}[trm]{Proposition}
\newtheorem{crl}[trm]{Corollary}
\newtheorem{xmp}{Example}
\newtheorem{rmk}{Remark}
\title{Topology of pre-images under Anosov endomorphisms}
\author[{\tiny  Mohammad saeed Azimi  and Khosro Tajbakhsh}]{Mohammad saeed Azimi and Khosro Tajbakhsh}

\address{Mohammad saeed Azimi, Department of Mathematics, Faculty of Mathematical Sciences, Tarbiat Modares University,
Tehran 14115-134, Iran}

\email{saeed.azimi@modares.ac.ir}

\address{Khosro Tajbakhsh, Department of Mathematics, Faculty of Mathematical Sciences, Tarbiat Modares University,
Tehran 14115-134, Iran}

\email{khtajbakhsh@modares.ac.ir , arash@cnu.ac.kr}

\subjclass[2010]{Primary 37D05, 37D20}
\keywords {Hyperbolic Endomorphism; Anosov Endomorphism; Covering Map; Unstable Manifolds}
\begin{document}

\begin{abstract}
For an endomorphism it is known that if all the points in the manifold have dense sets of pre-images then the dynamical system is transitive. The inverse has been shown for a residual set of points but the the exact inverse has not yet been investigated before. Here we are going to show that under some conditions it is true for Anosov endomorphisms on closed manifolds, by using the fact that Anosov endomorphisms are covering maps.\\
\\

\end{abstract}

\maketitle

\setlength{\parindent}{.5cm}
\noindent
\section{Introduction}
It is well known for non-injective endomorphisms that if for every point the set of pre-images of that point is dense in the manifold then the endomorphism is transitive (i.e. there exists a point that its orbit is dense in the manifold) and in \cite{lizana} Lizana and Pujalz have used this to prove rigidity of transitivity for a special class of endomorphisms on $\mathbb{T}^n$. A very important class of endomorphisms is the class of Anosov endomorphisms. In \cite{LPV}, Lizana, Pinheiro and Varandas have shown that for robustly transitive local diffeomorphisms there is a residual set of point in the manifold such that the points in this set, each one has dense set of pre-images. Here, we use a topological approach, specially the fact that Anosov endomorphisms on a closed manifold are covering maps. We are going to investigate specially about the pre-images of periodic points and show the reciprocative of the well known result above is true for transitive Anosov endomorphisms under some conditions over the geodesics defined by eigenvectors of $Df_x$ for every point. So the set of pre-images of every point is dense in the manifold. Also we will introduce a counterexample for the situation without those conditions.

In this paper, we take all the manifolds to be a closed Riemannian manifold.

Starting from \cite{przytycki} and \cite{manepugh}, the definition of Anosov endomorphism has been an important generalization method of the well known definition of Anosov diffeomorphisms;

\begin{dfn}
Let
$f\in\textup{diff}^r(M)$,
a compact subset $\Lambda\in M$ is called \emph{hyperbolic} with respect to $f$, if for every point $p\in\Lambda$ there is a splitting;
$T_p\Lambda=E^s _p\oplus E^u _p$
and there are
$C>0$ and $0<\lambda<1$
such that
$Df(E^s _p)=E^s _{f(p)}$, $Df(E^u _p)=E^u _{f(p)}$
and for all integer $n\geq 0$;\\
$\forall v\in E^s _p \ \ || Df_p ^n v||\leq C\lambda ^n||v||$,\\
$\forall u\in E^u _p \ \ ||Df_p ^{-n}u||\leq C\lambda ^{-n}||u||$.
\end{dfn}

If $\Lambda =M$ then $f$ is called \emph{Anosov diffeomorphism}.

\begin{xmp}\label{linear diffeo}
\cite{stock} Define $A:\mathbb{T}^2\rightarrow\mathbb{T}^2$ to be;
\[\begin{bmatrix}
2&1\\
1&1
\end{bmatrix} (\textup{mod}1)\]
This is a linear map on $\mathbb{R}^2$ and its eigenvalues are $\frac{3\pm\sqrt{5}}{2}$ which are greater and lesser than one and the eigenspace is the whole $\mathbb{T}^2$
so it is an Anosov diffeomorphism. Also note that $\det A=1$.
\end{xmp}

\begin{rmk}
Considering the map $f:M\rightarrow M$, for every point $x\in M$, the \emph{orbit} of $x$, $O_x$ is $\{f^n(x)|n\in\mathbb{N}\}$. The \emph{trajectory} of $x$, $(x_j)_{j\in\mathbb{Z}}$ such that $x_0=x$, $x_j\in\{f^j(x) \}$ and $f(x_j)=x_{j+1}$. Notice that if $f$ is not injective then $\textup{card}(\{(x_j)_{j\in\mathbb{Z}} \})>1$, but if it is an injective map then the trajectory of each point is unique.  
\end{rmk}

In the case where the map is not injective hyperbolicity is defined considering not just the points but their trajectories under the map.

\begin{dfn}\label{dfn anosov endo}
Let $f:M\rightarrow M$
be a local diffeomorphism, $f$ is called \emph{Anosov endomorphism} if for every trajectory
$(x_n)_{n\in\mathbb{Z}}$ with respect to $f$, for all
$i\in\mathbb{Z}$, $Df(E^s _{x_0})=E^s _{f(x_0)}$, $Df(E^u _{x_i})=E^u _{x_{i+1}}$, $T_{x_i}M=E_{x_i} ^s\oplus E_{x_i} ^u$ and there exist $C>0$ and $0<\lambda<1$ such that;\\
$\forall v\in E^s _{x_i} \ \ ||Df_{x_i} ^n v||\leq C\lambda ^n ||v||$,\\
$\forall u\in E^u _{x_i} \ \ ||Df_{x_i} ^n u||\geq C\lambda ^{-n} ||u|| $. 
\end{dfn}

There is also another way to define Anosov endomorphism;

\begin{dfn}\label{sakai's dfn of anosov endo.}
\cite{sakai} A $C^1$ local diffeomorphism $f:M\rightarrow M$ is called Anosov endomorphism if $Df$ uniformly contracts a continuous sub-bundle $E^s\subset TM$ into itself, and the action of $Df$ on $\frac{TM}{E^s}$ is uniformly expanding.
\end{dfn}

An important result about the definitions above is the continuity of the splitting defined in them ( see \cite{przytycki},\cite{Shub}). 

\begin{xmp}\label{xmp for linear a. endo.}
\cite{przytycki} Define $B:\mathbb{T}^2\rightarrow\mathbb{T}^2$ to be;
\[\begin{bmatrix}
n&1\\
1&1
\end{bmatrix} (\textup{mod}1),\qquad  (n\in\{3,4,5,...\}) \]
The eigenvalues are $\frac{(n+1)\pm\sqrt{(n+1)^2-4(n-1)}}{2}$ and for $n>2$, like the previous example, both of them are greater than zero, one of them is lesser than and the other is greater than one and the eigenspace is the whole manifold so according to definition \ref{dfn anosov endo}, this is an Anosov endomorphism.
\end{xmp}

The main difference between Anosov diffeomorphisms and Anosov endomorphisms comes in to notice in the matter of structural stability. In his thesis Shub claimed that by a procedure similar to the expanding maps, non-injective Anosov endomorphisms are structurally stable. But in \cite{przytycki}, Przytycki proved him wrong, although in the same paper he showed the inverse limit stability of Anosov endomorphisms. Another main difference as it is mentioned above, is the definition of unstable manifolds based on the trajectories so that they can be non-unique \cite{MT}.

An important characteristic of non-injective Anosov endomorphisms is that they are non-trivial covering maps of the manifolds they are defined on \cite{Franks}. In this paper we are going to use this property among other things to show that under a certain condition an Anosov endomorphism is transitive if and only if the set of pre-images of any point is dense in the manifold;

\begin{trm}[Main theorem]\label{theresultiff}
Let $f:M\rightarrow M$ be an Anosov endomorphism and for every point $x\in M$, geodesics defined by eigenvectors of $Df_x$ be dense in $M$ or $f$ is a product of maps with this condition then the set of pre-images of each point, is dense in $M$ if and only if $f$ is transitive.
\end{trm}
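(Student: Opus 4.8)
The plan is to treat the two implications separately. The implication ``pre-images dense $\Rightarrow$ transitive'' is the classical fact recalled in the introduction, so I would simply cite it. All the work is in the converse, ``transitive $\Rightarrow$ pre-images dense,'' and here I would first recast the conclusion in a more tractable form. For a nonempty open set $U\subseteq M$ and a point $y\in M$, the set of pre-images of $y$ meets $U$ at some level exactly when $y\in f^{n}(U)$ for some $n\ge 0$. Hence ``the pre-images of every point are dense'' is equivalent to the single statement that $\bigcup_{n\ge 0}f^{n}(U)=M$ for every nonempty open $U$. Since $f$ is a local diffeomorphism, and in fact a covering map by Franks' theorem, each $f^{n}(U)$ is open, so $W:=\bigcup_{n\ge0}f^{n}(U)$ is an open, forward-invariant set, and the goal becomes $W=M$.

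First I would show $W$ is dense. Topological transitivity of $f$ (equivalent, for an open continuous map of a compact manifold, to the existence of a dense orbit) gives, for every nonempty open $V$, an $n$ with $f^{n}(U)\cap V\neq\emptyset$; thus $W$ meets every open set and is dense. The remaining and genuinely delicate point is to upgrade ``dense'' to ``all of $M$,'' i.e. to place a prescribed $q\in M$ inside some $f^{n}(U)$. Here I would use the hyperbolic structure together with the geodesic hypothesis. Shrinking $U$, assume it contains a local product box built from a small unstable disk $D^{u}$ and stable disk $D^{s}$. Because $Df$ uniformly expands $E^{u}$, the images $f^{n}(D^{u})$ are pieces of an unstable leaf of length growing like $\lambda^{-n}$; the hypothesis that the geodesics tangent to the unstable eigendirections are dense makes these long unstable arcs $\varepsilon_{n}$-dense with $\varepsilon_{n}\to 0$. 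Thus $f^{n}(U)$ is a thin neighbourhood of an increasingly dense unstable arc.

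The main obstacle is exactly the competition between this density and the stable contraction: $f^{n}(D^{s})$ has size $\sim\lambda^{n}$, so the stable thickness of $f^{n}(U)$ shrinks exponentially, whereas the arc is only polynomially dense, and one cannot conclude $q\in f^{n}(U)$ for a single $n$ by a naive covering estimate. To overcome this I would pass to the universal cover, where the covering-map property and the continuity of the splitting let us conjugate $f$ to a hyperbolic linear model (the $\mathbb{R}^{n}\to\mathbb{R}^{n}$ lift for the toral examples). There the pre-images of a point under $f^{n}$ form a single coset of the lattice $A^{-n}\mathbb{Z}^{n}$, whose projection to $M$ is governed precisely by the eigendirections; density of the eigenvector geodesics then translates directly into density of $\bigcup_{n}f^{-n}(y)$, now as an exact statement with no metric competition. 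The product-structure clause of the hypothesis is used to reduce a general $f$ to irreducible factors each enjoying the geodesic-density condition, and transitivity is what guarantees, via density of periodic points, that the argument applies uniformly to every point rather than to a residual set.

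I expect the hardest part to be making the passage from periodic points to arbitrary points rigorous: inverse branches of $f$ expand along $E^{s}$, so controlling pre-images of a general $y$ by those of a nearby periodic point requires uniformity in $n$ that is not automatic, and it is here that the linear model on the cover, rather than a direct manifold estimate, does the essential work. I would also isolate, as a lemma, the exact equivalence between density of the eigenvector geodesics and density of the projected lattice cosets $A^{-n}\mathbb{Z}^{n}$, since that is the technical heart on which the whole converse rests.
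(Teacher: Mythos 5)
Your reduction of the conclusion to the statement that $W=\bigcup_{n\ge 0}f^{n}(U)=M$ for every nonempty open $U\subseteq M$ is correct, as is the observation that transitivity makes $W$ open, forward-invariant and dense; and you have located the real difficulty exactly where it lies, in the competition between the growing unstable density of $f^{n}(U)$ and its exponentially shrinking stable thickness. The problem is that the step you invoke to resolve this difficulty does not exist: a non-injective Anosov endomorphism cannot in general be ``conjugated to a hyperbolic linear model'' on the universal cover. The theorem is stated for arbitrary closed manifolds, where no linear model is available at all; and even on $\mathbb{T}^{n}$, topological conjugacy of a nonlinear Anosov endomorphism to its linearization fails --- this is precisely Przytycki's theorem \cite{przytycki}, recalled in the introduction of this paper, that non-injective Anosov endomorphisms are not structurally stable (only inverse-limit stable), for otherwise every map $C^{1}$-close to a linear Anosov endomorphism would be conjugate to it. Moreover such a conjugacy would transport the well-defined unstable foliation of the linear model to $f$, whereas for non-special Anosov endomorphisms the unstable direction at a point genuinely depends on the choice of backward trajectory \cite{MT}. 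Consequently the lattice computation $f^{-n}(y)\sim A^{-n}\mathbb{Z}^{n}$, which you yourself call the technical heart on which the whole converse rests, is available only for the literally linear examples, and nothing in your proposal replaces it for a general $f$; the passage from ``$W$ dense'' to ``$W=M$'' therefore remains unproved.

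For comparison, the paper never linearizes; its substitute is intrinsic. It first proves density of pre-images for a periodic point $p$ (Theorem \ref{maintheoremforpriodics}): since $(f^{n},M)$ is a $k^{n}$-sheeted cover of a closed manifold, the sets $f^{-n}(p)$ are nets whose separation constants $\beta_{-n}$ tend to $0$; joining nearby pre-images by geodesics produces sets $\xi^{-n}(p)$ in which $f^{-n}(p)$ is $\beta_{-n}$-dense, and the hypothesis on density of the geodesics tangent to the eigendirections is then used to exclude the alternative that all pre-images of $p$ remain in a nowhere-dense, backward-invariant set $L$ --- exactly the reducible situation of Example \ref{conter xmp}. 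Density of pre-images is then propagated from periodic points to arbitrary points through $\omega$-limit sets and stable/unstable sets (Propositions \ref{resultforws} and \ref{fornonperiodicnongeneric}), the product clause is handled separately (Corollary \ref{product of them}), and the ``only if'' direction uses the spectral decomposition. If you want to salvage your outline, it is this kind of argument on $M$ itself, using the finite-sheeted covers $(f^{n},M)$ together with the geodesic hypothesis, that must replace the conjugacy you assumed.
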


\section{Proof of the main theorem}

\begin{dfn}\label{transitivity}
A continuous map $f:M\rightarrow M$ is called \emph{transitive} if for every pair of non-empty open sets $U,V\subset M$, there exists $n\in\mathbb{N}$ such that $f^n(U)\cap V\neq\phi$. 
\end{dfn}

There is this well known proposition about transitivity;

\begin{prp}\label{trnstvtequal} (\cite{stock}, proposition 2.2.1)
Let $M$ be a complete space without any isolated point and $f:M\rightarrow M$,continuous, $f$ is transitive, if and only if there exists $p\in M$ such that $\overline{O_p}=M$.
\end{prp}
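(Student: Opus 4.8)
\textit{Proof proposal.} This is the classical Birkhoff transitivity theorem, so the plan is to prove the two implications separately, arranging matters so that the hypothesis ``no isolated point'' is used only for one direction and ``complete'' only for the other.

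For the direction ``dense orbit $\Rightarrow$ transitive'', I would assume there is $p\in M$ with $\overline{O_p}=M$ and fix nonempty open sets $U,V$. First, density of $O_p$ gives an $m$ with $f^m(p)\in U$. The key intermediate step is that the tail $\{f^n(p):n>m\}$ is again dense in $M$, and this is exactly where ``no isolated point'' enters: a finite set has empty interior in a space with no isolated point (every neighborhood of a non-isolated point meets $M$ in infinitely many points), so deleting the finitely many iterates $p,f(p),\dots,f^m(p)$ from a dense set leaves a dense set. Granting this, the dense tail meets $V$ at some time $m+k$ with $k\geq 1$, whence $f^{m+k}(p)=f^k(f^m(p))\in f^k(U)\cap V$, so $f^k(U)\cap V\neq\emptyset$ and $f$ is transitive.

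For the converse ``transitive $\Rightarrow$ dense orbit'', I would run the standard Baire-category argument, which is the substance of the statement. Fix a countable base $\{U_k\}_{k}$ for the topology of $M$ (available since our manifolds are second countable; in the general statement this is the role played by separability, which should be read into ``complete space''). For each $k$ set $V_k=\bigcup_{n\geq 1}f^{-n}(U_k)$, the set of points some forward iterate of which lands in $U_k$. Each $V_k$ is open, by continuity of $f$ and stability of openness under unions; and each $V_k$ is dense, since for an arbitrary nonempty open $W$ transitivity applied to the pair $W,U_k$ produces $n$ and a point $x\in W$ with $f^n(x)\in U_k$, i.e.\ $x\in W\cap V_k$. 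Completeness of $M$ then makes it a Baire space, so $G=\bigcap_k V_k$ is dense and in particular nonempty. Any $p\in G$ has forward orbit meeting every basic open set $U_k$, hence $\overline{O_p}=M$.

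The first implication is short, its only delicate point being the tail-density lemma and its reliance on the absence of isolated points. The main obstacle is the second implication: the work lies in choosing the correct countable family $\{V_k\}$ of open dense sets, verifying both properties carefully, and then invoking completeness through the Baire category theorem to extract a single point whose orbit visits every basic open set. The hypothesis ``complete without isolated points'' is precisely calibrated so that each clause supplies exactly the ingredient needed in one of the two directions.
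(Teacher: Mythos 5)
Your proposal is correct, and since the paper states this proposition without proof (quoting it from \cite{stock}, Proposition 2.2.1), the natural comparison is with the classical argument in that reference, which yours reproduces: the tail-density step (where the absence of isolated points is exactly what lets you discard the finitely many iterates $p,\dots,f^m(p)$) for the implication ``dense orbit $\Rightarrow$ transitive'', and the Baire-category argument with the open dense sets $V_k=\bigcup_{n\geq 1}f^{-n}(U_k)$ (where completeness, together with the separability you rightly note must be read into the hypothesis, yields a point of $\bigcap_k V_k$) for the converse. Your allocation of the two hypotheses to the two directions is also the standard and correct one, so there is nothing to add.
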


In the context of dynamical systems, because of the manifolds they take in to account, the proposition above is often considered as the definition of transitivity.

Another well known result in the matter of transitivity is about hyperbolic linear automorphisms;

\begin{prp}
\cite{stock} Let $A:\mathbb{T}^2\rightarrow\mathbb{T}^2$ be a hyperbolic linear toral automorphism, $A$ is transitive.
\end{prp}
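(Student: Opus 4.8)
The plan is to verify transitivity directly from Definition \ref{transitivity}, exploiting the expanding dynamics along the unstable direction. First I would record the eigenstructure of $A$: as an integer matrix with $\det A = \pm 1$, its characteristic polynomial is $x^2 - (\mathrm{tr}\,A)\,x \pm 1$ with integer coefficients, so the two eigenvalues $\lambda, \lambda'$ satisfy $\lambda\lambda' = \pm 1$. Hyperbolicity forces them to be real with $|\lambda| > 1 > |\lambda'|$: a complex conjugate pair would both lie on the unit circle, since $|\lambda|^2 = |\lambda\lambda'| = 1$. Moreover $\lambda$ must be irrational, because a rational eigenvalue of an integer matrix is an algebraic integer, hence an integer, and an integer eigenvalue with product $\pm 1$ against its partner would equal $\pm 1$, contradicting hyperbolicity. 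Consequently the expanding eigenvector $v^u$ has irrational slope, since solving $(A - \lambda I)v = 0$ gives slope $(\lambda - a)/b$ with $a,b \in \mathbb{Z}$ and $b \neq 0$ (were $b = 0$, the matrix would be triangular with integer eigenvalues, excluding hyperbolicity).

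Next I would invoke the classical fact that a line of irrational slope projects to a dense subset of $\mathbb{T}^2$, upgraded to a uniform statement: given a nonempty open $V \subset \mathbb{T}^2$, there is a length $L = L(V) > 0$ such that every segment in the direction $v^u$ of length exceeding $L$ meets $V$. To prove transitivity, take arbitrary nonempty open $U, V$, choose $p \in U$, and inside $U$ select a short segment $\gamma$ through $p$ in the direction $v^u$. Because $A$ is linear and preserves the eigendirection $v^u$ while expanding it by the factor $|\lambda| > 1$, the image $A^n(\gamma)$ is again a $v^u$-segment, of length $|\lambda|^n \cdot \mathrm{length}(\gamma)$, which tends to infinity. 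Choosing $n$ so large that this length exceeds $L$ gives $A^n(\gamma) \cap V \neq \emptyset$, hence $A^n(U) \cap V \neq \emptyset$. By Definition \ref{transitivity} this establishes transitivity; equivalently, by Proposition \ref{trnstvtequal}, it yields a point with dense orbit.

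I expect the main obstacle to be the uniform density step, namely upgrading the qualitative density of a single irrational-slope line to the claim that \emph{all} sufficiently long $v^u$-segments meet $V$. This is where the quantitative content sits: one must exclude the possibility that ever longer segments keep avoiding $V$, which I would handle by a compactness argument on the closed manifold $\mathbb{T}^2$, or equivalently by the equidistribution of the linear flow generated by $v^u$. The remaining ingredients, namely the $A$-invariance of the unstable direction and the exact expansion factor $|\lambda|$, are routine once the eigenstructure above is fixed.
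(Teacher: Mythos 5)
The paper gives no proof of this proposition at all: it is quoted directly from the cited textbook of Brin and Stuck. Your argument is correct, complete in its essentials, and is in fact the standard proof found in that reference: the eigenvalue analysis (real eigenvalues off the unit circle with irrational slope for the eigendirections), the uniform density of long segments in the irrational unstable direction (correctly reduced to a compactness/minimality argument for the linear flow), and the exponential growth of unstable segments under iteration, giving $A^n(U)\cap V\neq\emptyset$. Note that your proof actually yields the stronger property of topological mixing, since once $|\lambda|^n\,\mathrm{length}(\gamma)>L(V)$ the intersection $A^n(U)\cap V$ is nonempty for \emph{every} larger $n$, not merely for a single $n$; this is exactly how the cited text states the result.
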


If $f$ is a diffeomorphism then definition \ref{transitivity} is also true for $f^{-1}$, so in that, the set $\mathbb{N}$ can be changed to $\{-1,-2,-3,...\}$ and the definition remains intact. But in the case of Anosov endomorphisms, $f^{-1}$ is meaningless but we can still investigate the set of pre-images of the points in $M$ under Anosov endomorphisms.

In the following, we also need these two definitions;

\begin{dfn}\label{index}
Let $f:M\rightarrow M$ be a transitive Anosov endomorphism, for each point $x\in M$ we call the $\textup{dim}(E^s _x)$, \emph{index} of $f$. 
\end{dfn}

Note that because of the continuity of the splitting in the definition of Anosov endomorphisms and because the map is transitive, index of $f$ does not change by points.

\begin{dfn}\label{dfn of degree}
Let $f:M\rightarrow M$ be an Anosov endomorphism with $n$ number of pre-images for each point in $M$ ($n$ number of sheets for the covering it makes), we call $n$, the \emph{degree} of an Anosov endomorphism $f$.
\end{dfn}

\begin{rmk}
For Anosov maps the degree is the same for every point because if there are points with different degree then the map has singularities in some points which is not possible for Anosov maps. Although by simple modifications, we can also deduce the result of this paper to the maps that have finitely many degrees over the manifold. 
\end{rmk}

\begin{rmk}\label{sheets of a cover}
Anosov endomorphisms are  covering maps and except for Anosov diffeomorphisms, they are not trivial and the manifolds on which they are defined, are evenly covered \cite{Franks}. Since in this paper we take the manifold $M$ to be a closed, there are a finite number of sheets for these covering maps and the number equals the degree of the Anosov endomorphism.

Also the determinant of Jacobian of the Anosov endomorphism equals the degree of the map \cite{przytycki}.
\end{rmk}

Let $f:M\rightarrow M$ be a transitive Anosov endomorphism, $(f,M)$ is a cover for $M$. Considering the endomorphism $f$, because $M$ is compact there is a finite number of sheets (equal to the degree of $f$), $S(1),S(2),S(3),...,S(k)\subset M$, each of them homeomorphic to $M$ under $f|_{S(i)}:S(i)\rightarrow M$ and for every point $x\in M$ there is a $d_1>0$ such that if $i\neq j$, $\textup{d}(x(i),x(j))>d_1$ for all $x(i)$ and $x(j)$ in $f^{-1}(x)$ and uniquely in $S(i)$ and $S(j)$. Also for every $1\leq j\leq k$, $S(i,j):=(f|_{S(i)})^{-1}(S(j))\subset S(i)$ and $f^2|_{S(i,j)}\rightarrow M$ is a homeomorphism. This also means that the interior of $S(i,j)$ is not empty and $\textup{diam}(S(i,j))>0$ for all $i$ and $j$. So $(f^2,M)$ is a cover for the manifold with exactly, $k^2$ sheets such that there are $k$ sheets as subsets of each $S(i)$, we denote them by $S(i,1),S(i,2),\dots,S(i,k)\subset S(i)$ and each of them is homeomorphic to $M$ by $f^2$. So considering all $S(i)$s, there are $k^2$ sets $S(i_1,i_2)\subset M$. By induction, for every $n\in\mathbb{N}$, $(f^n,M)$ is a cover for $M$ with $k^n$ sheets. Also $M$ is evenly covered and $S(i_1,i_2,i_3,\dots,i_n)$s do not intersect. For every sheet $S(i_1,\dots,i_n)$, the map $f^{n}|_{S(i_1,\dots,i_n)}:S(i_1,\dots,i_n)\rightarrow M$ is a homeomorphism and there is $d_n>0$ such that for every pair of the $n$th pre-images of $x$, $x(i_1,\dots,i_n)$ and $x(j_1,\dots,j_n)$ respectively in sheets $S(i_1,\dots,i_n)$ and $S(j_1,\dots,j_n)$, $\textup{d}(x(i_1,\dots,i_n),x(j_1,\dots,j_n))>d_n$. 

We saw that $S(i_1,\dots,i_{n-1},i_n)$s are subsets of $S(i_1,\dots,i_{n-1})$ and following this, step by step, finally they are subsets of $S(i_1)$. In every sheet of $(f^r,M)$ there are $k$ sheets of $(f^{r+1},M)$ and $d_{r+1}=\frac{d_r}{k}$ so  $d_{r+1}=\frac{d_1}{k^{r+1}}$ and so on.

About the distribution of the sheets of the covers $(f^n,M)$s, by the context above we have: For all open sets $U\subset M$ and for all $n\in\mathbb{N}$, there is a sheet $S(i_1,\dots,i_n)$ of the cover $(f^n,M)$ such that $S(i_1,\dots,i_n)\cap U\neq\emptyset$. As $l\in\mathbb{N}$ gets greater, if we consider the sheets $S(i_1,\dots,i_n,\dots,i_{n+l})\subset S(i_1,\dots,i_n)$ of $(f^{n+l},M)$ then there exists $N\in\mathbb{N}$ such that for all $m>N$ there is $S(i_1,\dots,i_m)\cap U\neq\emptyset$ and for all $l\in\mathbb{N}$, $S(i_1,\dots,i_m,\dots,i_{m+l})\cap U\neq\emptyset$. If $f$ was an expanding map then the intersection of the sheets with $U$ would be inclusion which would give the density of pre-images of every point (See proposition \ref{density in expandings}).

Similar to the diffeomorphism case we have the two following propositions;

\begin{prp}\label{backtrans}
Let $M$ be a compact metric space and $f:M\rightarrow M$ be an endomorphism. If $f$ is transitive then for every pair of non-empty open sets $U$ and $V$ in $M$, there is $n\in\mathbb{N}$ such that $f^{-n}(U)\cap V\neq\phi$.
\end{prp}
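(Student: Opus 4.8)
The plan is to reduce the statement to the defining property of transitivity (Definition \ref{transitivity}) by a purely set-theoretic rewriting, with the roles of $U$ and $V$ interchanged. First I would unwind the meaning of the set $f^{-n}(U)\cap V$: a point $y$ lies in this set precisely when $y\in V$ and $f^n(y)\in U$. Crucially, this interpretation uses only the definition of the full pre-image $f^{-n}(U)=\{y\in M: f^n(y)\in U\}$ and never any invertibility of $f$, so it remains valid for a non-injective endomorphism. Consequently $f^{-n}(U)\cap V\neq\emptyset$ holds if and only if there is a point of $V$ whose $n$-th iterate lands in $U$, which is exactly the condition $f^n(V)\cap U\neq\emptyset$.

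Next I would apply the hypothesis. Since $f$ is transitive, Definition \ref{transitivity} applied to the ordered pair $(V,U)$ — taking $V$ in the role of the first open set and $U$ in the role of the second — produces an $n\in\mathbb{N}$ with $f^n(V)\cap U\neq\emptyset$. Feeding this $n$ back through the equivalence of the previous paragraph immediately gives $f^{-n}(U)\cap V\neq\emptyset$, which is the desired conclusion. The same $n$ works, so no limiting argument or extraction of a subsequence is needed.

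I do not expect a genuine obstacle here; the content is just the observation that the forward condition for the swapped pair is literally the same as the backward condition for the original pair. The only point requiring care is the non-injective setting: one must read $f^{-n}(U)$ as the pre-image set rather than as the image under an inverse map, and check that the equivalence $f^{-n}(U)\cap V\neq\emptyset\iff f^n(V)\cap U\neq\emptyset$ survives without bijectivity — which it does, since both sides assert exactly that some $y\in V$ satisfies $f^n(y)\in U$. I would also remark that the compactness and metric hypotheses on $M$ are not actually used in this argument; they are presumably retained for uniformity with the neighbouring results and could be dropped for this proposition.
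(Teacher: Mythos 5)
Your proof is correct, and it takes a genuinely different route from the paper's. You reduce the claim to pure set theory: $f^{-n}(U)\cap V\neq\emptyset$ holds if and only if some $y\in V$ satisfies $f^{n}(y)\in U$, i.e.\ if and only if $f^{n}(V)\cap U\neq\emptyset$, and then you apply Definition \ref{transitivity} to the swapped pair $(V,U)$. The paper instead applies transitivity to $(U,V)$ to get $f^{n}(U)\cap V\neq\emptyset$, pulls this set back under $f^{-n}$, decomposes $f^{-n}(f^{n}(U))$ into pieces $U(i_1,\dots,i_n)$ lying in the sheets $S(i_1,\dots,i_n)$ of the cover $(f^{n},M)$, argues that every such piece meets $f^{-n}(V)$, and concludes $U\cap f^{-n}(V)\neq\emptyset$ --- the statement with the roles of $U$ and $V$ exchanged, which is equivalent since the pair is universally quantified (your explicit swap handles this point more cleanly). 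What your route buys is exactly what you observe at the end: it needs neither compactness, nor the metric, nor --- more importantly --- the covering-map structure. The paper's proof invokes ``the degree $k$ of $f$'' and the sheet decomposition, objects that exist for Anosov endomorphisms but are not available for the general endomorphism of a compact metric space hypothesized in the proposition; moreover, its step ``there is $U(i_1,\dots,i_n)=U$'' is imprecise, since $U$ need not be contained in a single sheet of the cover. Your two-line argument is both more general and free of these issues, so it is arguably the better proof of the proposition as stated.
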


\begin{proof}
Suppose $U$ and $V$ to be open sets in $M$ and $k$ be the degree of $f$. There is $n\in\mathbb{N}$ such that we have $f^n (U)\cap V\neq\emptyset$ then $f^{-n}(f^n (U)\cap V)\neq\emptyset$; but $f^{-n}(f^n (U)\cap V)=f^{-n}(f^n(U))\cap f^{-n}(V)$ and $f^{-n}(f^n(U))$ is the union of the sets $U(i_1,\dots,i_n)=f^{-n}(f^n(U))\cap S(i_1,\dots,i_n)$. There is $U(i_1,\dots,i_n)=U$ and because $f^n$ is a covering map, each one of $U(i_1,\dots,i_n)$s is homeomorphic to $U$ and $U(i_1,\dots,i_n)\cap f^{-n}(V)\neq\emptyset$ for all $(i_1,\dots,i_n)$ ($i_r\in\{1,\dots,k\}$). Hence $U\cap f^{-n}(V)\neq\emptyset$.

\end{proof}

The following proposition is a crucial fact about Anosov endomorphisms;

\begin{prp}\label{clperf=omegaf}
(\cite{przytycki}, proposition 3.2) Let $f:M\rightarrow M$ be an Anosov endomorphism then $\overline{per(f)}=\Omega(f)$.
\end{prp}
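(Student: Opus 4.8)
The plan is to prove the two inclusions separately, the inclusion $\overline{per(f)}\subseteq\Omega(f)$ being routine and the reverse inclusion $\Omega(f)\subseteq\overline{per(f)}$ carrying all the content. For the first inclusion, observe that any periodic point $p$ with $f^n(p)=p$ is non-wandering: given a neighborhood $U$ of $p$ we have $p\in U$ and, since $p=f^n(p)\in f^n(U)$, also $p\in f^n(U)$, so $U\cap f^n(U)\neq\emptyset$. Thus every periodic point lies in $\Omega(f)$, and because $\Omega(f)$ is closed it contains $\overline{per(f)}$.

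For the reverse inclusion I would use an Anosov closing lemma: there should exist constants $\varepsilon_0>0$ and $C>0$ such that whenever a point $y$ returns to within $\delta<\varepsilon_0$ of itself along a trajectory, i.e. $\textup{d}(y,f^n(y))<\delta$, there is a periodic point $p$ of period $n$ with $\textup{d}(p,y)<C\delta$. Granting this, the argument is short. Fix $x\in\Omega(f)$ and $\varepsilon>0$, and take a neighborhood $U$ of $x$ of diameter less than $\min(\varepsilon,\varepsilon_0)/(C+1)$. By definition of the non-wandering set there are $n\in\mathbb{N}$ and a point $y\in U$ with $f^n(y)\in U$; since $y$ and $f^n(y)$ both lie in $U$, their distance is at most $\textup{diam}(U)$. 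The closing lemma then yields a periodic point $p$ with $\textup{d}(p,y)<C\,\textup{diam}(U)$, so $\textup{d}(p,x)\leq\textup{d}(p,y)+\textup{d}(y,x)<\varepsilon$. As $\varepsilon$ was arbitrary, $x\in\overline{per(f)}$.

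The difficulty is therefore concentrated in establishing the closing lemma for the non-injective map $f$. Because hyperbolicity in Definition \ref{dfn anosov endo} is defined along trajectories, and the unstable space $E^u_{x_i}$ depends on the entire backward history, the natural setting for the closing construction is the inverse limit $\hat{M}=\{(x_j)_{j\in\mathbb{Z}} : f(x_j)=x_{j+1}\}$ equipped with the shift homeomorphism $\hat{f}$. On $\hat{M}$ the map $\hat{f}$ is a genuine homeomorphism carrying a hyperbolic splitting pulled back from the trajectory splitting of $f$, so the classical Anosov closing lemma for hyperbolic homeomorphisms, built from the local product structure and the contraction and expansion of stable and unstable manifolds, applies and produces periodic trajectories shadowing nearly closed ones. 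I would then push these periodic trajectories down to $M$ via the canonical projection $\pi\colon\hat{M}\to M$, $\pi((x_j))=x_0$, which conjugates $\hat{f}$ over $f$ and sends periodic trajectories to periodic points.

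I expect the main obstacle to be the passage between the dynamics on $M$ and on $\hat{M}$. One must check that a non-wandering point $x$ of $f$ lifts to a non-wandering point of $\hat{f}$, so that the return $f^n(y)\in U$ can be upgraded to a return of an entire trajectory segment near a chosen lift of $x$, and conversely that the periodic trajectory obtained upstairs projects to a periodic point genuinely $\varepsilon$-close to $x$. Controlling the backward coordinates of these lifts, equivalently choosing the pre-images $x(i_1,\dots,i_n)$ consistently across the sheets $S(i_1,\dots,i_n)$ set up before Proposition \ref{backtrans}, is precisely what legitimizes the estimate $\textup{d}(p,x)<\varepsilon$ in the endomorphism setting.
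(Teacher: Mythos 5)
The paper itself offers no proof of this proposition: it is quoted directly from Przytycki (\cite{przytycki}, Proposition 3.2), so the only meaningful comparison is with that source, whose argument does run through the inverse limit, as yours does. Your first inclusion ($\overline{per(f)}\subseteq\Omega(f)$) and your reduction of $\Omega(f)\subseteq\overline{per(f)}$ to a closing lemma are both fine.

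The gap is precisely at the step you flag but do not resolve: applying the classical closing lemma for hyperbolic homeomorphisms on $\hat M$ to ``a lift $\hat y$'' of the nearly returning point $y$. That lemma requires $d_{\hat M}(\hat y,\hat f^n(\hat y))<\delta$, and this does \emph{not} follow from $d_M(y,f^n(y))<\delta$ for any choice of lift: the coordinate of $\hat f^n(\hat y)$ in position $-k$ is $y_{n-k}$ while that of $\hat y$ is $y_{-k}$, and for $k\geq n$ these lie on different backward branches of pre-images, which are separated by the definite distance $d_1$ between sheets of the cover (see the discussion before Proposition \ref{backtrans}); generically no backward history of $y$ nearly returns. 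Nor does your suggested repair of ``choosing the pre-images consistently across the sheets'' work as stated: a branch of $f^{-1}$ contracts unstable directions but \emph{expands} stable directions by a factor up to $\lambda^{-1}$, so successively choosing pre-images of $y$ close to the segment $f^{n-1}(y),f^{n-2}(y),\dots$ loses control after finitely many steps. The standard resolution, and essentially Przytycki's, is different: form the periodic $\delta$-pseudo-orbit obtained by repeating the finite segment $y,f(y),\dots,f^{n-1}(y)$ bi-infinitely; invoke the shadowing lemma for Anosov endomorphisms (proved via the inverse limit) to obtain a genuine trajectory $(z_j)_{j\in\mathbb{Z}}$ shadowing it; then use expansivity of $\hat f$ on $\hat M$: the shift of $(z_j)$ by $n$ shadows the same pseudo-orbit, so by uniqueness of the shadowing trajectory $(z_{j+n})=(z_j)$, i.e.\ $z_0$ is a genuine periodic point of period $n$ within $\varepsilon$ of $y$. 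With this pseudo-orbit/shadowing/expansivity mechanism in place of the ``near-return of a lift'' reduction, your outline becomes a proof; note also that it makes the question of whether non-wandering points of $f$ lift to non-wandering points of $\hat f$ (a delicate statement for general maps) entirely unnecessary.
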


Now we want to see if there is a point which its set of pre-images is dense in the manifold, first we have this rather obvious result;

\begin{lem}\label{densityofpreimagesofadense}
Let $f:M\rightarrow M$ be a transitive Anosov endomorphism; if a set is dense in $M$ then also the set of its pre-images is dense in $M$.
\end{lem}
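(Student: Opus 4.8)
The plan is to exploit the single structural fact that an Anosov endomorphism is a covering map, and in particular a local diffeomorphism, so that $f$ is an \emph{open} and \emph{surjective} map. Write $A$ for a set that is dense in $M$, and understand its set of pre-images as $f^{-1}(A)=\bigcup_{a\in A}f^{-1}(a)$. To prove that $f^{-1}(A)$ is dense it suffices, by definition of density, to show that $f^{-1}(A)$ meets every non-empty open set $U\subset M$.

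First I would note that, since $f$ is a local diffeomorphism, it carries open sets to open sets; hence for any non-empty open $U$ the image $f(U)$ is open, and it is non-empty because $f$ is surjective (being a covering map of the closed manifold $M$). Next, using the hypothesis that $A$ is dense, I would choose a point $a\in A\cap f(U)$, which is possible precisely because $f(U)$ is a non-empty open set and $A$ meets every such set. Finally, because $a\in f(U)$ there exists some $x\in U$ with $f(x)=a\in A$, so that $x\in U\cap f^{-1}(A)$; since $U$ was an arbitrary non-empty open set, this establishes the density of $f^{-1}(A)$.

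I do not expect any serious obstacle: the entire argument rests on the openness of $f$, which is immediate from $f$ being a local diffeomorphism, combined with the density of $A$. The only point deserving a line of care is to record explicitly that ``the set of pre-images'' is read as $f^{-1}(A)$, and that the conclusion then holds a fortiori for the larger set $\bigcup_{n\ge 1}f^{-n}(A)$ of all iterated pre-images, since that set contains $f^{-1}(A)$. It is worth remarking that transitivity of $f$ plays no role in this lemma; the openness and surjectivity furnished by the covering-map structure do all the work, so the statement in fact holds for any (not necessarily transitive) Anosov endomorphism.
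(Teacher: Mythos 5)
Your proof is correct, but it takes a genuinely different and more elementary route than the paper. The paper invokes the covering structure it set up earlier: each sheet $S(i_1,\dots,i_n)$ of the cover $(f^n,M)$ is homeomorphic to $M$ under $f^n$, so the pre-image of a dense set is dense in each sheet, and since $M$ is the union of the sheets, the union of these pre-images is dense in $M$. You instead use only that $f$ is an open map (being a local diffeomorphism) and surjective: for any non-empty open $U$, the set $f(U)$ is open and non-empty, hence meets $A$, hence $U$ meets $f^{-1}(A)$. Your argument buys two things. First, it is more robust: the paper's global decomposition of the connected manifold $M$ into finitely many disjoint ``sheets'' each homeomorphic to $M$ is actually delicate (genuine sheets of a covering exist only over evenly covered neighborhoods; globally one only gets fundamental-domain-like pieces), whereas openness plus surjectivity requires no such structure. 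Second, you correctly isolate that neither transitivity nor hyperbolicity is used --- the lemma holds for any open surjective continuous self-map --- a point the paper's statement obscures by hypothesizing transitivity it never uses. What the paper's approach buys in exchange is consistency with its running sheet notation: the sheetwise formulation (pre-images dense \emph{within each sheet}) is what gets reused in later arguments such as Proposition \ref{densityofpreimagesofadenseorbit}, where $\epsilon$-density of pre-images inside a chosen sheet $S(i_1,\dots,i_n)$ is the form of the statement actually needed, and your formulation would need a small additional step to recover that.
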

\begin{proof}
$f$ is an Anosov endomorphism so, as we mentioned above $f^n$ is a covering map for $M$ for every $n\in\mathbb{N}$ therefore each sheet of every cover $(f^n,M)$ for $M$, is homeomorphic to $M$ so if a set is dense in $M$ then its pre-image in each sheet of the cover is dense in that sheet. $M$ is the union of the sheets of a cover $(f^n,M)$. Thus the set containing union of the pre-images of a dense set of $M$ is dense in $M$.
\end{proof}

It implies that the points which have dense orbits have dense sets of pre-images;

\begin{prp}\label{densityofpreimagesofadenseorbit}
Let $M$ be a closed manifold and $f:M\rightarrow M$ be an Anosov endomorphism then every point with a dense orbit, has a dense set of pre-images.
\end{prp}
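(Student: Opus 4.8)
The plan is to reduce everything to a statement about a \emph{dense set}, so that Lemma \ref{densityofpreimagesofadense} becomes applicable, and then to upgrade from pre-images of the orbit to pre-images of the single point. First I would record that if $x$ has a dense orbit then $\overline{O_x}=M$, so by Proposition \ref{trnstvtequal} the map $f$ is transitive; in particular the whole covering apparatus built above (the finite family of sheets $S(i_1,\dots,i_n)$ on which each $f^n$ is a homeomorphism onto $M$) and Lemma \ref{densityofpreimagesofadense} are all at our disposal. Restated, the goal is to show that $\bigcup_{n\ge 1} f^{-n}(x)$ meets every nonempty open set $V\subseteq M$.

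The natural first move is to feed the dense set $O_x$ into Lemma \ref{densityofpreimagesofadense}, which immediately yields that $\bigcup_{n\ge 1} f^{-n}(O_x)$ is dense in $M$. The difficulty is that this set is strictly larger than the one we care about: a point $y$ with $f^n(y)=f^m(x)$ for some $m\ge 1$ need not satisfy $f^k(y)=x$ for any $k$, because $f$ is not injective and the outer $f^m$ cannot be cancelled. In effect, Lemma \ref{densityofpreimagesofadense} applied to the orbit only reproves density of the grand orbit of $x$ (the points sharing a forward image with $x$), which already follows trivially from $O_x\subseteq\bigcup_n f^{-n}(O_x)$; it does not by itself give density of the backward orbit $\bigcup_{n\ge1}f^{-n}(x)$. \textbf{This passage from pre-images of the orbit to pre-images of the single point $x$ is the main obstacle}, and it is exactly where the covering structure and the hyperbolicity must enter.

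To close the gap I would argue as follows. Fix a nonempty open $V$. Since $O_x$ is dense there is an $m$ with $w:=f^m(x)\in V$, and since the tail $\{f^{m+j}(x):j\ge0\}$ is again a dense orbit, the orbit of $w$ returns arbitrarily close to $x$: there are $l_j\to\infty$ with $f^{\,l_j}(w)\to x$. For each $j$ let $g_j$ be the inverse branch of $f^{\,l_j}$ determined by the sheet $S(i_1,\dots,i_{l_j})$ containing $w$, so that $g_j(f^{\,l_j}(w))=w\in V$ while $g_j(x)$ is a \emph{genuine} pre-image of $x$. One then wants $g_j(x)\in V$ for large $j$, which would finish the proof. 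The delicate point — and the reason a dense orbit, rather than mere transitivity, is assumed — is that the inverse branches $g_j$ \emph{expand the stable direction} by a factor up to $\lambda^{-l_j}$, so the closeness $f^{\,l_j}(w)\to x$ does not transfer through $g_j$ for free.

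I expect the hyperbolic estimates controlling this stable expansion to be the technical heart of the argument, and it is the step I would write out with the most care. Concretely, I would invoke the standard fact that for a transitive Anosov system the stable manifolds are dense, so that the pre-images of $x$, which under the inverse branches accumulate along stable leaves, are forced to fill $M$; alternatively one extracts a convergent subsequence $g_j(x)\to v\in\overline V$ by compactness and identifies $v$ as a pre-image of $x$ after arranging the branch depths to stay controlled. Either way, combining Proposition \ref{clperf=omegaf} (density of periodic points, hence $\Omega(f)=M$) with the local product structure is what converts the recurrence supplied by the dense orbit into honest pre-images of $x$ inside every open set.
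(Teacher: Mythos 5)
Your proposal stops exactly where a proof would have to begin, and the gap cannot be closed: the step you defer (``converting the recurrence supplied by the dense orbit into honest pre-images'') is not a technicality, because the proposition as stated is false without an extra irreducibility hypothesis. Both of your suggested repairs fail. The ``standard fact'' that stable manifolds of a transitive Anosov system are dense is a fact about Anosov \emph{diffeomorphisms}; for endomorphisms it fails, and it fails precisely in the paper's own Example \ref{conter xmp}, the product $B$ of the doubling map on $S^1$ with the cat map $A$ of Example \ref{linear diffeo}: there the stable set of $(x_1,z)\in S^1\times\mathbb{T}^2$ is $\{x_1\}\times W^s_A(z)$, dense only in its fibre. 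The compactness alternative also fails, since a limit of points of $f^{-l_j}(x)$ with $l_j\to\infty$ need not be a pre-image of $x$ at any finite depth. In fact Example \ref{conter xmp} kills the statement itself: $B^{-n}(x_1,z)=\{((x_1+m)/2^n,\,A^{-n}z):0\le m<2^n\}$, so $\bigcup_n B^{-n}(x_1,z)$ is dense in $\mathbb{T}^3$ if and only if the \emph{backward} $A$-orbit of $z$ is dense in $\mathbb{T}^2$. Now take $z$ on the unstable manifold $W^u_A(0)$ of the fixed point of the cat map, so that $A^{-n}z\to 0$ and the backward orbit is certainly not dense. A Baire argument on $S^1\times W^u_A(0)$ produces such points with dense forward $B$-orbit: for any arc $I\subset S^1$, any segment $J\subset W^u_A(0)$, and any nonempty open $U\times V\subset\mathbb{T}^3$, for all large $n$ one has $2^nI=S^1$ while $A^nJ$ is an unstable segment long enough to be $\epsilon$-dense (uniform minimality of the irrational linear flow), so a nonempty open subset of $I\times J$ lands in $U\times V$ at time $n$; intersecting over a countable basis gives a dense $G_\delta$ of parameters $(x_1,z)$ with $z\in W^u_A(0)$ whose $B$-orbit is dense. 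Any such point has a dense orbit but a non-dense set of pre-images. This is exactly why Theorem \ref{theresultiff} must assume density of the geodesics generated by the eigendirections.

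For comparison: the paper's own proof of this proposition is, ironically, the naive argument you correctly reject in your second paragraph. It declares $(f^n|_{S(i_1,\dots,i_n)})^{-1}(\{f(p),\dots,f^n(p)\})$ to be ``the subset of pre-images of the point $p$'' and then invokes Lemma \ref{densityofpreimagesofadense}. As you observe, a point $y$ with $f^n(y)=f^j(p)$, $1\le j\le n$, need not satisfy $f^k(y)=p$ for any $k$, because the outer $f^j$ cannot be cancelled when $f$ is not injective; for the doubling map, $p+\frac{1}{2}\in f^{-1}(f(p))$ is a pre-image of $p$ only if $p$ is periodic, never when $p$ has a dense orbit. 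So the paper's argument only yields density of the pre-images of the orbit $O_p$, which, as you note, is trivial and not the assertion. Your diagnosis of the obstacle is therefore sharper than the paper's treatment of it; but your proposal does not overcome the obstacle, and the counterexample above shows that nothing can.
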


\begin{proof}
Suppose that $p\in M$ is a point with dense orbit. For each $\epsilon>0$ there exists $n\in\mathbb{N}$ such that $\{f(p),f^2(p),\dots,f^n(p)\}$ is $\epsilon$-dense in $M$. In every sheet $S(i_1,i_2,\dots,i_n)\subset M$, of the cover $(f^n,M)$ the subset of pre-images of the point $p$, $(f^n |_{S(i_1,i_2,\dots,i_n)})^{-1}(\{f(p),f^2(p),\dots,f^n(p)\})$, is homeomorphic to $\{p,f(p),f^2(p),\dots,f^{n-1}(p)\}$ under $f^n:S(i_1,i_2,\dots,i_n)\rightarrow M$, and it is $\epsilon$-dense in $\mathbb M$. Because $\epsilon$ and also $S(i_1,i_2,\dots,i_n)$ are chosen arbitrarily, by Lemma \ref{densityofpreimagesofadense}, the set of the pre-images of $p$ is dense in $M$.
\end{proof}

Notice that because a linear Anosov endomorphism is transitive and there is a large set of points with dense orbit under it in $\mathbb{T}^n$, the Lemma and proposition above are true for such systems. Specially because the points with dense orbit are dense in $\mathbb{T}^n$, Lemma \ref{densityofpreimagesofadenseorbit} shows that the set of the points with dense set of pre-images is at least dense in $\mathbb{T}^n$. We are going to investigate this more precisely on closed manifolds.

By modifying an important results about Anosov diffeomorphisms,\cite{W} we have;

\begin{prp}\label{density of a dense set}
The set of points with dense set of pre-images under a transitive Anosov endomorphism, is at least a dense set in $M$.
\end{prp}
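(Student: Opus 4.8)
The plan is to reduce the statement to the classical Baire-category description of transitivity, and then feed its output into Proposition~\ref{densityofpreimagesofadenseorbit}. First I would fix a countable base $\{U_i\}_{i\in\mathbb{N}}$ for the topology of $M$, which exists because a closed Riemannian manifold is second countable. For each index $i$ I set $G_i:=\bigcup_{n\geq 0} f^{-n}(U_i)$, the set of points whose forward orbit eventually enters $U_i$. Each $G_i$ is open, being a union of continuous pre-images of the open set $U_i$.

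The key step is the density of every $G_i$. Given any nonempty open set $V\subset M$, transitivity in the sense of Definition~\ref{transitivity} supplies some $n\in\mathbb{N}$ with $f^n(V)\cap U_i\neq\emptyset$, which is exactly the statement $V\cap f^{-n}(U_i)\neq\emptyset$, hence $V\cap G_i\neq\emptyset$. Since $V$ was arbitrary, $G_i$ is dense. Because $M$ is a compact, hence complete, metric space, the Baire category theorem applies, so the intersection $\mathcal{R}:=\bigcap_{i\in\mathbb{N}} G_i$ is a dense $G_\delta$ set. A point $x\in\mathcal{R}$ has forward orbit meeting every basic open set $U_i$, so $\overline{O_x}=M$; conversely any point with dense orbit lies in every $G_i$. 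Thus $\mathcal{R}$ is precisely the set of points with dense orbit, and it is dense in $M$.

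Finally I would invoke Proposition~\ref{densityofpreimagesofadenseorbit}: every point with a dense orbit has a dense set of pre-images. Hence the set of points with dense set of pre-images contains the dense set $\mathcal{R}$, and is therefore itself at least dense in $M$, which is the assertion. I expect the only genuinely endomorphism-specific point, and so the conceptual heart of the argument, to be this last passage from dense orbit to dense pre-image set; it is already isolated in Proposition~\ref{densityofpreimagesofadenseorbit} through the covering structure of the iterates $f^n$, while everything preceding it is the classical diffeomorphism argument transported verbatim. The one place where care is needed is to confirm that it is the forward formulation of transitivity, rather than the backward version of Proposition~\ref{backtrans}, that directly yields the density of the $G_i$; both are available, but only the forward images give the required conclusion here.
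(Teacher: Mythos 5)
Your proof is correct, but it takes a genuinely different route from the paper's. The paper runs the Baire argument \emph{forwards}: for each $\epsilon>0$ it covers $M$ by finitely many $\epsilon$-discs $B_1,\dots,B_n$ and sets $E_j=\bigcup_{i\geq 1}f^i(B_j)$, which is open because an Anosov endomorphism is an open map and dense because $f$ is transitive; any $p\in\bigcap_{j}E_j$ satisfies $f^{-i}(p)\cap B_j\neq\emptyset$ for some $i$ and every $j$, so the pre-image set of $p$ is $\epsilon$-dense, and the conclusion follows by letting $\epsilon\to 0$. You instead run the Baire argument \emph{backwards}, with $G_i=\bigcup_{n}f^{-n}(U_i)$, to produce the classical residual set $\mathcal{R}$ of points with dense forward orbit, and then convert dense orbits into dense pre-image sets by citing Proposition \ref{densityofpreimagesofadenseorbit}. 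The trade-offs are real: the paper's version targets pre-image density directly and never needs the covering-sheet machinery behind Proposition \ref{densityofpreimagesofadenseorbit}, but it uses openness of $f$ (automatic, since $f$ is a local diffeomorphism) and, as written, leaves the final intersection over a sequence of $\epsilon$'s implicit; your version needs only continuity of $f$ for the Baire step and is complete as written (indeed it shows the set in question is residual, which is slightly stronger than stated), but it produces an a priori smaller set --- a point can have a dense set of pre-images without having a dense forward orbit --- and its dynamical content is entirely delegated to Proposition \ref{densityofpreimagesofadenseorbit}, whose covering-space argument is where the endomorphism structure actually enters. Both arguments are legitimate; yours is the shorter path given what the paper has already established.
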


\begin{proof}
For every $\epsilon>0$ there exists a finite basis $\beta _\epsilon=\{B_1,B_2,\dots, B_n\}$ for $M$ consisting of $\epsilon$-discs. Denote $\cup _{i=1} ^{\infty} f^i (B_j)$ by $E_j$. Because $f$ is an Anosov endomorphism, it is an open map and because $f$ is also transitive, $E_j$ is open and dense. $M$ is a Bair space so $\cap _{j=1} ^n E_j\neq\emptyset$ and there exists a point $p\in\cap _{j=1} ^n E_j$ then for every $1\leq j\leq n$ there is $i\in\mathbb{N}$ such that $p\in f^i(B_j)$. So $f^{-i}(p)\cap B_j\neq\emptyset$. Because it is true for all $\epsilon>0$ and all the points in $\cap _{j=1} ^n E_j$, the set of the points with dense set of pre-images is dense in $M$.
\end{proof}

\begin{trm}
\cite{przytycki} Let $f:M\rightarrow M$ be an Anosov endomorphism then there is $\epsilon$ such that for any trajectory $(x_i)_{i\in\mathbb{Z}}$ of any $x\in M$ the set;
\[W^s _{x_i,\epsilon}=\{y\in M|\forall n\in\mathbb{N}\quad \textup{d}(f^n(y),f^n(x_i))<\epsilon \} \]
is a manifold which is called local stable manifold of $x$, and the set;
\[W^u _{x_i,\epsilon}=\{y\in M|\exists (y_n)_{-\infty} ^0\quad \forall n\in\mathbb{N}\quad \textup{d}(y_{-n},x_{i-n})<\epsilon \} \]
is a manifold which is called local unstable manifold of $x_i$ related to the trajectory $(x_i)_{i\in\mathbb{Z}}$ under $f$.  
\end{trm}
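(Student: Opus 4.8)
The statement is the local stable/unstable manifold theorem for Anosov endomorphisms, and the plan is to realize both families of sets as fixed points of a graph transform, in the spirit of the Hadamard--Perron theory for diffeomorphisms, the one essential modification being that the backward construction must be carried out along a prescribed trajectory. First I would fix a trajectory $(x_i)_{i\in\mathbb{Z}}$ and linearize $f$ along it through the exponential charts, setting $\tilde f_i=\exp_{x_{i+1}}^{-1}\circ f\circ\exp_{x_i}$ on a small ball of $T_{x_i}M$, so that $D\tilde f_i(0)=Df_{x_i}$ respects the splitting $E^s_{x_i}\oplus E^u_{x_i}$ with the contraction and expansion rates from Definition \ref{dfn anosov endo}. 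Because $M$ is compact and the splitting is continuous, there is a single $\epsilon>0$ for which every $\tilde f_i$ is, on its $\epsilon$-ball, a uniformly small $C^1$-perturbation of the hyperbolic linear map $Df_{x_i}$, with all bounds independent of $i$ and of the trajectory; securing this uniformity is what lets one $\epsilon$ serve every trajectory at once.

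For the stable manifolds I would use only forward dynamics, which for an endomorphism is canonically defined and depends on $x_i$ alone. I consider families of maps $g_j\colon B^s_{x_j}(\epsilon)\to E^u_{x_j}$ with $g_j(0)=0$ and small Lipschitz constant, indexed by the forward orbit $j\ge i$, and impose the forward-invariance $\tilde f_j(\mathrm{graph}\,g_j)\subseteq\mathrm{graph}\,g_{j+1}$. Writing $\tilde f_j$ in block form, invariance lets me solve for $g_j$ in terms of $g_{j+1}$ with the expanding block inverted; the factor $\|(Df_{x_j}|_{E^u})^{-1}\|\le\lambda$ makes the induced transform on families a contraction in the sup-Lipschitz metric. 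Its unique fixed point is a family of graphs whose $i$-th member, pushed back by $\exp_{x_i}$, is an embedded disc tangent to $E^s_{x_i}$, and the uniform hyperbolic estimates identify it with the set $W^s_{x_i,\epsilon}$ of points whose forward orbit stays $\epsilon$-close to that of $x_i$.

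For the unstable manifolds the same machine runs backward, and this is where the trajectory genuinely enters. Since $f$ is a local diffeomorphism each $\tilde f_j$ is invertible near $0$, and to build a graph over $E^u_{x_i}$ I push forward graphs over $E^u_{x_{i-n}}$ through $\tilde f_{i-n},\dots,\tilde f_{i-1}$ as $n\to\infty$; every such composition requires the backward base points $x_{i-1},x_{i-2},\dots$, which for a non-injective $f$ are not determined by $x_i$ but are exactly the data of the chosen trajectory. With the roles of the contracting and expanding blocks interchanged, pushing forward over $E^u$ contracts the fibre by the factor $\|Df|_{E^s}\|\le\lambda$, so the transform is again a contraction; its fixed graph is tangent to $E^u_{x_i}$ and, by the same shadowing estimates, equals $W^u_{x_i,\epsilon}$. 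A different trajectory through $x_i$ selects different backward base points and hence a different manifold, which is precisely why $W^u$ is indexed by the trajectory rather than by the point.

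The hard part will be regularity rather than existence: the contraction argument yields Lipschitz graphs almost for free, but proving that the invariant graphs are $C^r$ requires the fibre contraction theorem, running the graph transform simultaneously on the graphs and on a candidate field of derivatives and checking that the derivative cocycle is itself contracted, with the $C^1$ step the delicate one and higher orders following by bootstrapping. The endomorphism-specific overhead is to carry all of these estimates uniformly in the trajectory and to verify that the backward branch selection is well defined and smooth along any admissible $(x_i)$; once $C^1$ tangency to $E^s_{x_i}$ and $E^u_{x_i}$ is in hand, matching the graphs to the dynamically defined sets $W^s_{x_i,\epsilon}$ and $W^u_{x_i,\epsilon}$ is a direct consequence of the uniform hyperbolicity, which finishes the proof.
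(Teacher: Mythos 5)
The paper itself contains no proof of this theorem: it is quoted verbatim from \cite{przytycki}, so the only baseline to compare against is the cited source. Your graph-transform outline is essentially that proof --- Przytycki's construction is the Hadamard--Perron argument carried out trajectory-wise (equivalently, on the natural extension/inverse limit), with the stable manifold depending only on the point and the unstable manifold indexed by the choice of backward orbit, exactly as you describe; the one technical point you elide is replacing the Riemannian metric by an adapted one so that the constant $C$ in the paper's definition of Anosov endomorphism can be taken equal to $1$, without which the one-step contraction and expansion estimates you invoke for the graph transform need not hold.
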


Following the theorem above we have;

\begin{dfn}
The sets;
\[W^s _x=\cup _{i=0} ^{\infty}f^{-n}(W^s _{f^n(x),\epsilon}) \]
and
\[W^u _x=\cup _{i=0} ^{\infty}f^{n}(W^u _{x_{-n},\epsilon}) \]
respectively are called the stable and unstable sets of the point $x\in M$.
\end{dfn}

Notice that $W^{s(u)} _x=\{y\in M|\textup{d}(f^n(y),f^n(x))\rightarrow 0(\infty) \}$. Also note that the stable and unstable sets defined above may not even be submanifolds if the degree of $f$ is greater than one.

If $f:M\rightarrow M$ be a transitive diffeomorphism then the stable and unstable manifolds of every points are dense in $M$ \cite{stock}. An essential concept that make this happen, is \emph{local product structure} of $M$ under $f$ \cite{Shub}. An endomorphism is locally diffeomorphism so by indicating $\tau$ such that $W^u _\tau$ be unique for each point, and modifying the definition for the Anosov-endomorphisms case we have;

\begin{dfn}\label{dfn lps for endos}
A closed hyperbolic invariant set is said to have a \emph{local product structure} if for small $\epsilon<\tau$ and $\delta$, $W^u _{\epsilon,x}\cap W^s _{\epsilon,y}$ is unique and belongs to the hyperbolic set whenever $\textup{d}(x,y)<\delta$.
\end{dfn}

Also in \cite{przytycki}, Przytcky has shown this in the inverse limit space. Therefore exactly the same as the diffeomorphism case,\cite{Shub} we have;

\begin{prp}\label{prp lps for endos}
Let $f:M\rightarrow M$ be a hyperbolic endomorphism, if $\overline{Per(f)}$ is hyperbolic then it has a local product structure.
\end{prp}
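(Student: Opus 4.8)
The plan is to reduce the assertion to the diffeomorphism situation by passing to the inverse limit space $\hat M$, the natural phase space on which $\hat f$ becomes a homeomorphism and on which, as recalled just above the statement, Przytycki \cite{przytycki} has already established a local product structure for the lifted hyperbolic set; the task is then to transfer that structure down to $M$, the only subtlety being that the unstable set $W^u$ is trajectory-dependent, which is exactly what the threshold $\tau$ of Definition \ref{dfn lps for endos} is there to control. First I would record, from the stable/unstable manifold theorem quoted above applied to the hyperbolic set $K:=\overline{Per(f)}$, a uniform radius $\epsilon_0>0$ so that for $\epsilon\le\epsilon_0$ every local stable disc $W^s_{x,\epsilon}$ and every local unstable disc $W^u_{x_i,\epsilon}$ is a $C^1$ embedded disc, tangent respectively to $E^s_{x}$ and $E^u_{x_i}$, depending continuously on its data.

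Next I would fix $\tau\le\epsilon_0$ smaller than half the separation constant $d_1$ of the covering $(f,M)$. Then any point has at most one $f$-preimage within $\tau$ of a prescribed point, so, iterating, the backward branches used to build $W^u_{\tau,x}$ are unambiguous once a trajectory lift $\hat x\in\hat M$ of $x$ is fixed; hence at scale $<\tau$ the local unstable leaf downstairs is precisely the projection of the unstable leaf of $\hat x$, and the object $W^u_{\epsilon,x}$ in Definition \ref{dfn lps for endos} is well posed. For the intersection itself I would use that the splitting is continuous on the compact set $K$ and that $E^s_{x_0}\oplus E^u_{\hat x}=T_{x_0}M$, which gives a uniform lower bound on the angle between the two bundles. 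Consequently, for $\textup{d}(x,y)<\delta$ with $\delta$ small, the discs $W^u_{\epsilon,x}$ and $W^s_{\epsilon,y}$ are $C^1$-close to a pair of complementary affine subspaces through nearby centres, and a transversality argument (the implicit function theorem together with the dimension count $\dim E^s+\dim E^u=\dim M$) produces a single intersection point $z=W^u_{\epsilon,x}\cap W^s_{\epsilon,y}$.

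The step I expect to be the main obstacle is showing that this $z$ actually lies in $K$, i.e. the invariance clause of the definition. Here I would lift the whole configuration: membership $z\in W^s_{\epsilon,y}$ forces the forward orbit of $z$ to stay $\epsilon$-close to that of $y$, while $z\in W^u_{\epsilon,x}$ supplies a backward trajectory $(z_{-n})$ staying $\epsilon$-close to the chosen past $(x_{-n})$ of $\hat x$; splicing these yields a genuine full trajectory $\hat z\in\hat M$ lying in $W^u_{\hat x}\cap W^s_{\hat y}$. Przytycki's local product structure in $\hat M$ then places $\hat z$ in the lift $\hat K$ of $K$, and projecting onto the zeroth coordinate, which is unambiguous at scale $<\tau$ by the uniqueness secured above, gives $z\in K$. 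Staying instead on $M$, one may alternatively build a periodic $\delta$-pseudo-orbit through $z$ and apply the shadowing lemma for the hyperbolic set $K$ together with Proposition \ref{clperf=omegaf} to exhibit true periodic points accumulating on $z$, again concluding $z\in\overline{Per(f)}=K$. Either route verifies both clauses of Definition \ref{dfn lps for endos} and completes the proof.
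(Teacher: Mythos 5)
Your proposal is correct and is essentially the paper's own approach, because the paper in fact prints no argument for this proposition: it justifies it by exactly the two references you develop, namely Przytycki's local product structure on the inverse limit space \cite{przytycki} and the ``same as the diffeomorphism case'' shadowing/closing argument of \cite{Shub}, and your write-up supplies the details (the choice of $\tau$ below the covering separation constant $d_1$, the splicing of forward and backward trajectories into a point $\hat z\in\hat M$, and the projection back to $M$). The one refinement worth recording is that the lift $\hat x$ must be chosen with its backward trajectory inside $K=\overline{Per(f)}$ --- such a lift exists because $f(K)=K$ --- since Przytycki's result concerns the hyperbolic set $\hat K\subset\hat M$ and gives no information about brackets taken along pasts that leave $K$.
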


The maps we are studying are Anosov and by proposition \ref{clperf=omegaf}, the set of periodic points is dense in $M$ so the whole manifold has a local product structure under $f$ and modifying proposition 5.10.3 of \cite{stock} we have;

\begin{prp}\label{densityofmanifolds}
Let $f:M\rightarrow M$ be an Anosov endomorphism and $\Omega(f)=M$ then the pre-images of stable and unstable sets are dense in $M$.
\end{prp}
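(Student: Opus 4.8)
The plan is to reduce the statement to the density of the stable and unstable \emph{sets} themselves, since once $W^s_x$ and $W^u_x$ are known to be dense, Lemma \ref{densityofpreimagesofadense} immediately gives that their pre-images are dense as well. So the whole work is to prove that, for every $x\in M$, the sets $W^s_x$ and $W^u_x$ are dense. First I would record the standing hypotheses in usable form: from $\Omega(f)=M$ together with Proposition \ref{clperf=omegaf} we get $\overline{Per(f)}=M$, so by Proposition \ref{prp lps for endos} the whole manifold carries a local product structure; moreover, since an Anosov map with $\Omega(f)=M$ on a connected manifold is transitive (spectral decomposition), Proposition \ref{backtrans} is available, giving backward transitivity for free.

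For the unstable set, I would follow the modification of Proposition 5.10.3 of \cite{stock}. The idea is to show $\overline{W^u_x}=M$ by a connectedness argument. First, by compactness and hyperbolicity the local unstable manifolds have a uniform minimal size $\rho>0$, and by continuity of the splitting the global unstable set is a union of whole unstable leaves whose closure $\overline{W^u_x}$ is again saturated by entire local unstable leaves; in particular $\overline{W^u_x}$ is closed. The second, and decisive, step is to show $\overline{W^u_x}$ is also open: given $q\in\overline{W^u_x}$, for every $q'$ with $d(q,q')<\delta$ the local product structure of Definition \ref{dfn lps for endos} provides the unique point of $W^u_{\epsilon,q}\cap W^s_{\epsilon,q'}$, which lies in the unstable leaf $W^u_{\epsilon,q}\subset\overline{W^u_x}$; using transitivity to transport unstable leaves together with the forward contraction along $W^s_{\epsilon}$, one upgrades this to the statement that a whole neighborhood of $q$ lies in $\overline{W^u_x}$. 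Being open, closed and nonempty in the connected manifold $M$, $\overline{W^u_x}=M$.

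The stable set is handled dually. Here I would run the same scheme but replace forward transitivity by the backward transitivity of Proposition \ref{backtrans} and use the forward contraction of $W^s_{\epsilon}$ directly, together with the fact (from the definition) that $W^s_x=\bigcup_n f^{-n}(W^s_{f^n(x),\epsilon})$ is already assembled from pre-images; the same open-and-closed argument over the connected $M$ then yields $\overline{W^s_x}=M$. With both sets dense, Lemma \ref{densityofpreimagesofadense} closes the proof.

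I expect the main obstacle to be the non-invertibility of $f$. Unstable sets depend on the chosen trajectory and, as the paper already notes, need not even be submanifolds when the degree exceeds one, so the classical leaf-saturation and open-and-closed reasoning must be carried out carefully, effectively on the covering (or inverse-limit) level rather than on $M$ directly. Concretely, the delicate point is closing the gap from ``$\overline{W^u_x}$ is unstable-saturated and meets every local product box'' to ``$\overline{W^u_x}$ is open'': this is exactly where the uniqueness in the local product structure, which holds only near the diagonal ($d(x,y)<\delta$), must be combined with transitivity in the right order, and it is the step that genuinely uses the covering-map structure of the Anosov endomorphism.
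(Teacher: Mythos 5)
Your opening reduction---prove the sets themselves are dense, then invoke Lemma \ref{densityofpreimagesofadense}---is what the paper does for the \emph{unstable} half (it asserts density of $W^u_x$ via the diffeomorphism-type argument and Przytycki's inverse-limit result \cite{przytycki}, then passes to pre-images). The genuine gap is in your \emph{stable} half, where non-invertibility breaks the duality you rely on. To move from the intersection point $z\in W^s_{\epsilon,q}\cap W^u_{\epsilon,q'}$ toward $q'$ you must iterate \emph{backward} (contraction along unstable leaves happens in negative time), and backward iteration not only is multivalued, it time-shifts the reference orbit: using the paper's characterization $W^s_y=\{w\mid \textup{d}(f^n(w),f^n(y))\rightarrow 0\}$, one checks that $f^{-m}(W^s_x)=W^s_{x_{-m}}$ for any $x_{-m}\in f^{-m}(x)$, the stable set of a \emph{pre-image} of $x$, and this coincides with $W^s_x$ essentially only when $x$ is periodic; for general $x$ the union $\bigcup_m f^{-m}(W^s_x)$ is strictly larger than $W^s_x$. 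Hence what your open-and-closed scheme can at best deliver is that $q'$ is accumulated by $\bigcup_m f^{-m}(W^s_x)$---which is exactly the proposition's (weaker) claim about \emph{pre-images} of stable sets---while your stated conclusion $\overline{W^s_x}=M$ for every $x$ is not established by the argument. The paper words the proposition the way it does precisely because of this.

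Second, even for the correct target $\bigcup_m f^{-m}(W^s_x)$, the step you call the ``upgrade'' is the entire content of the proof, and your proposal leaves it open (you flag it yourself as the obstacle). The paper closes it with a mechanism your sketch never uses: by Proposition \ref{clperf=omegaf} choose a finite $\frac{\epsilon}{4}$-dense set $B$ of periodic points, pass to $g=f^\tau$ with $\tau$ the product of their periods (so each $p_i\in B$ is fixed by $g$ and its stable/unstable sets are genuinely $g$-invariant, killing the time-shift problem), and prove a chaining lemma in the sheets of the covers $(f^n,M)$: if a pre-image of $W^s_y$ in a sheet comes $\frac{\epsilon}{2}$-close to $p_i$ and $\textup{d}(p_i,p_l)<\frac{\epsilon}{2}$, then some deeper pre-image $g^{-m}(W^s_y(\dots))$ comes $\frac{\epsilon}{2}$-close to the pre-images of both $p_i$ and $p_l$; this is where the local product structure of Definition \ref{dfn lps for endos} and Proposition \ref{prp lps for endos} actually enters. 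Compactness and connectedness then allow any two periodic points to be joined by a chain of at most $N$ points with consecutive gaps below $\frac{\epsilon}{2}$, whence $g^{-Nm}(W^s_x)$ is $\epsilon$-dense. Without this periodic-point chaining (or an equivalent inclination-lemma substitute adapted to the covers), unstable saturation of the closure plus the local product structure does \emph{not} make the closure open: openness would require the closure to meet every local stable leaf of a product box, which is exactly what the chaining establishes. Note finally that the same time-shift issue afflicts your unstable half for non-periodic $x$ (forward iteration maps $W^u_x$ into $W^u_{f(x)}$, not into itself), which is presumably why the paper outsources that half to \cite{przytycki} rather than running a connectedness argument on $M$.
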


\begin{proof}
With an argument like the diffeomorphism case, the unstable manifold of a point, is dense in $M$ also Przytcky in \cite{przytycki}, has proved this by lifting $f$ to inverse limit space. So by the proposition \ref{densityofpreimagesofadense} its set of pre-images is dense in $M$. We show that the set of pre-images of a stable manifold of every point is dense. By proposition \ref{clperf=omegaf}, the set of periodic points under $f$, is dense in $M$ so it is $\epsilon$-dense in every sheet of each one of the covers $(f^n,M)$, for every $n\in\mathbb{N}$. Suppose that $\epsilon$ is chosen such that there exists $\delta>0$, if $\textup{d}(x,y)<\delta$ ($x,y\in M$), then for each trajectory $(y_i)_{i\in\mathbb{Z}}$, $W^s _{\epsilon,x}\cap W^u _{\epsilon, (y_i)}$ contains exactly one point and following the statements before the proposition, if $\epsilon$ is small enough, it meets the conditions of local product structure definition. Now consider $B:=\{p_i\in Per(f)|i=1,2,\dots,N\}$ to be an $\frac{\epsilon}{4}$-dense set in $M$ so that local unstable manifold of each point in $B$ transversally intersects with local stable manifold of the points in $B$ $\epsilon$-close to it.

Suppose that $\tau\in\mathbb{N}$ the product of the periods of all the points in $B$ and put $g=f^\tau$. Suppose that $S(j_1,j_2,\dots,j_r)$ is a sheet of the cover $(f^r,M)$ (let $\textup{deg}(f)=k$) and $\{p_i (j_1,\dots,j_r)|i=1,2,\dots,N\}$ is the pre-image of $B$ in $S(j_1,\dots,j_r)$, under $g$. Let $W^s _x (j_1,\dots,j_r)$ be the pre-image of $W^s _x$ for every $x\in M$, in $S(j_1\dots,j_r)$. We have;

\begin{lem}
With the assumptions above, if $\textup{d}(W^s _y (j_1,\dots,j_r)),p_i)<\frac{\epsilon}{2}$ and $\textup{d}(p_i,p_l)<\frac{\epsilon}{2}$ then there are $m\in\mathbb{N}$ and $S(j_1,\dots,j_r,\dots,j_{r+l})$, a sheet of the cover $(g^m,M)$ and a subset of $S(j_1,\dots,j_r)$, such that;
\[\textup{d}(g^{-m}(W^s _y (j_1,\dots,j_r,j_{r+1})),p_i (j_1,\dots,j_r,j_{r+1}))<\frac{\epsilon}{2} \]
and
\[\textup{d}(g^{-m}(W^s _y (j_1,\dots,j_r,j_{r+1})),p_l (j_1,\dots,j_r,j_{r+1}))<\frac{\epsilon}{2}.\]
\end{lem}

\begin{proof}
There exists $z\in W^s _y (j_1,\dots,j_r)\cap W^u _{\frac{\epsilon}{2},p_i}(j_1,\dots,j_r)$ so there is a $t_0\in\mathbb{N}$ such that $\textup{d}(g^t (z),p_i)<\frac{\epsilon}{2}$ for every $t>t_0$. So $\textup{d}(g^{-t} (z),p_l)<\epsilon$. Therefore like in the previous step there exists a point $w\in W^s _{g^t (z)}(j_1,\dots,j_r,\dots,j_t)\cap W^u _{\frac{\epsilon}{2},p_l}(j_1,\dots,j_r,\dots,j_t)$. Hence there is a $b_0\in\mathbb{N}$ such that $g^{-b}(w)\in S(j_1,\dots,j_r,\dots,j_t,\dots,j_b)$ and $\textup{d}(g^{-b} (w),p_l)<\frac{\epsilon}{2}$ for every $b>b_0$. Taking $S(j_1,\dots,j_r,\dots,j_{r+l})=S(j_1,\dots,j_r,\dots,j_{r+t},\dots,j_b)$ and $m=b_0 +t_0$, the proof completes.
\end{proof}

Since $M$ is compact and connected, any two periodic points $p_1$ and $p_2$ can be connected together by a path containing not more than $N$ periodic points with less than $\frac{\epsilon}{2}$ distance between any two consecutive periodic points. By the Lemma above, for any $x\in M$ and $\epsilon>0$ $g^{-Nm}(W^s _x)$ is $\epsilon$-dense in a sheet of the cover $(f^{Nm\tau},M)$ and a subset of $S(j_1,\dots,j_r)$. Because it is correct for every $\epsilon$ and the sheet $S(j_1,\dots,j_r)$ is chosen arbitrarily, the proposition follows.
\end{proof}

We saw that the set of pre-images of a point with dense forward orbit under a linear Anosov endomorphism $A:\mathbb{T}^n\rightarrow\mathbb{T}^n$ which is not an expanding map, is dense in $\mathbb{T}^n$. About Anosov diffeomorphisms, this is it but for expanding maps we have this well known result;

\begin{prp}\label{density in expandings}
Let $f:M\rightarrow M$ be an expanding map, each point in $M$ have a dense set of pre-images in $M$.
\end{prp}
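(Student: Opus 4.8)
The plan is to exploit the same covering structure $(f^n,M)$ built in the body of the paper, but to make precise the remark preceding the statement, namely that for an expanding map the intersection of a sheet with $U$ becomes an inclusion; this happens because the sheets $S(i_1,\dots,i_n)$ shrink to points. Recall that an expanding map is the case of Definition \ref{dfn anosov endo} in which $E^s=\{0\}$, so that the whole tangent space is expanded: there are $C>0$ and $0<\lambda<1$ with $\|Df^n_x v\|\geq C\lambda^{-n}\|v\|$ for every $v\in T_xM$ and every $n$. Being a uniformly expanding local diffeomorphism of a compact manifold, $f$ is a covering map of some degree $k$, and for each $n$ the map $f^n$ is a $k^n$-sheeted cover with the sheets $S(i_1,\dots,i_n)$ exactly as constructed above; in particular each $f^n|_{S(i_1,\dots,i_n)}\colon S(i_1,\dots,i_n)\to M$ is a homeomorphism whose inverse is a well-defined smooth branch.

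First I would prove the key metric estimate: $\textup{diam}\,S(i_1,\dots,i_n)\to 0$ uniformly in $(i_1,\dots,i_n)$ as $n\to\infty$. Given $a,b$ in a sheet $S=S(i_1,\dots,i_n)$, take a minimizing geodesic $\sigma$ in $M$ from $f^n(a)$ to $f^n(b)$, so that $\ell(\sigma)=\textup{d}(f^n(a),f^n(b))\leq\textup{diam}(M)$, and lift it through the inverse branch to the path $\tilde\sigma=(f^n|_S)^{-1}\circ\sigma$ joining $a$ to $b$ inside $S$. Applying the expansion inequality along $\tilde\sigma$ gives $\ell(\sigma)=\ell(f^n\circ\tilde\sigma)\geq C\lambda^{-n}\ell(\tilde\sigma)\geq C\lambda^{-n}\,\textup{d}(a,b)$, whence $\textup{d}(a,b)\leq C^{-1}\lambda^{n}\,\textup{diam}(M)$. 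Since $0<\lambda<1$ this bound is independent of the chosen sheet and tends to $0$, proving the claim. This estimate is the heart of the matter and the step I expect to require the most care, since it is exactly here that the absence of a stable direction is used: for a general Anosov endomorphism the sheets do not contract in the stable directions, which is why the argument in the body needed the density hypothesis on the geodesics of eigenvectors, whereas here the uniform expansion contracts every inverse branch in all directions. The one technical point to check is that the lift $\tilde\sigma$ stays within the single sheet $S$, which holds because we lift by the fixed homeomorphism $(f^n|_S)^{-1}$ rather than by an ambiguous path-lifting.

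With the estimate in hand the conclusion is immediate. Fix $x\in M$ and a nonempty open set $U$; choose $y\in U$ and $r>0$ with $B(y,r)\subseteq U$. Using $\textup{diam}\,S(i_1,\dots,i_n)\leq C^{-1}\lambda^n\,\textup{diam}(M)$, pick $n$ so large that every sheet of $(f^n,M)$ has diameter less than $r$, and let $S$ be a sheet containing $y$. Then $S\subseteq B(y,r)\subseteq U$. Since $f^n|_S$ is a homeomorphism onto $M$, the sheet $S$ contains the unique point of $f^{-n}(x)$ lying in $S$, and this point lies in $U$; hence $f^{-n}(x)\cap U\neq\emptyset$. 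As $x$ and $U$ were arbitrary, the set of pre-images of every point is dense in $M$.
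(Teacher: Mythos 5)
Your proposal is correct in substance, and it is essentially a rigorous ``backward'' version of the paper's one-line ``forward'' argument. The paper simply asserts the standard covering property of expanding maps: inside any $\epsilon$-disc $D_\epsilon$ there exist $H\subset D_\epsilon$ and $n\in\mathbb{N}$ with $f^n(H)=M$, so every point has an $n$-th pre-image in $D_\epsilon$. You instead pull the target point back: you prove the quantitative estimate $\textup{diam}\,S(i_1,\dots,i_n)\leq C^{-1}\lambda^{n}\,\textup{diam}(M)$ for the inverse branches of $f^n$, and conclude that for $n$ large some branch of $f^{-n}$ carries all of $M$, in particular the given point $x$, into the given ball. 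The two arguments are dual formulations of the same fact --- your small sheet $S\subseteq B(y,r)$ is exactly a valid choice of the paper's $H$ --- but your version actually proves the key step that the paper leaves unjustified, and it isolates precisely where expansion enters: every inverse branch is a uniform contraction because there is no stable direction, which is the right contrast with the Anosov case treated in Theorem \ref{maintheoremforpriodics}, where the pre-images do not contract along $E^s$.

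One technical caveat: you phrase the argument through global sheets $S(i_1,\dots,i_n)$, each mapped homeomorphically onto $M$ by $f^n$. Strictly speaking no such decomposition can exist when $M$ is connected and $\textup{deg}(f)>1$, since a connected manifold cannot be written as a disjoint union of finitely many open subsets, and sheets of a covering are only defined over evenly covered neighborhoods; for the doubling map on $S^1$, for instance, no subset of $S^1$ is mapped homeomorphically onto $S^1$ (a half-open fundamental domain maps bijectively but its inverse is discontinuous). This defect is inherited from the paper's own setup, so it is not a gap you introduced, and in your proof it is harmless and easily repaired without changing the computation: given $y\in U$ and $x\in M$, take a minimizing geodesic $\sigma$ from $f^n(y)$ to $x$ with $\ell(\sigma)\leq\textup{diam}(M)$, and lift it under the covering map $f^n$ to a path $\tilde\sigma$ with $\tilde\sigma(0)=y$ and $f^n\circ\tilde\sigma=\sigma$. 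Then $z:=\tilde\sigma(1)$ satisfies $f^n(z)=x$ and
\[
\textup{d}(y,z)\leq\ell(\tilde\sigma)\leq C^{-1}\lambda^{n}\ell(\sigma)\leq C^{-1}\lambda^{n}\,\textup{diam}(M)<r
\]
for $n$ large, so $f^{-n}(x)\cap U\neq\emptyset$. This is exactly your length estimate, with path lifting of the covering $f^n$ replacing the global inverse branch.
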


\begin{proof}
Suppose that $D_\epsilon$ is an $\epsilon$-disk in $M$, for every $\epsilon>0$. Since $f$ is an expanding map, there exists $H\subset D_\epsilon$ and $n\in\mathbb{N}$ such that $f^n(H)=M$. Therefore for every $p\in M$ there is $x\in f^{-n}(p)\cap D_\epsilon$.
\end{proof}

For Anosov endomorphisms which are not diffeomorphisms or expanding maps, it is different from diffeomorphisms because they are non trivial covering maps also it is different from expanding maps because they also have a contracting factor. Therefor in addition to the points with dense orbit we are going to investigate about pre-images of the points that their orbits and hence their $\omega$-limit sets have various topological properties.

Notice that each periodic point under an Anosov endomorphism is also an image of a non-periodic point. It is because of the degree of the Anosov endomorphism being greater than 1 and pre-image set of any point contains more than one point but at most one of them is periodic. So we have;

\begin{prp}\label{periodicsaredense}
Let $f:M\rightarrow M$ be an Anosov endomorphism then the set of pre-images of the set of all the periodic points, $\cup_{n\in\mathbb{N}} f^{-n}(Per(f))$ such that $f^{-i}(Per(f))=\cup_{x\in Per(f)}f^{-i}(x)$, is dense in $M$.
\end{prp}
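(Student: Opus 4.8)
The plan is to translate density into a statement about forward images, and then to locate a genuine periodic point inside a suitable open set. Saying that $\bigcup_{n\in\mathbb{N}}f^{-n}(Per(f))$ is dense means that every nonempty open $U\subseteq M$ meets it; since $w\in U\cap f^{-n}(Per(f))$ is equivalent to $f^n(w)\in f^n(U)\cap Per(f)$, this is the same as asking that for some $n$ one has $f^n(U)\cap Per(f)\neq\emptyset$, i.e. that the set $V:=\bigcup_{n\geq 1}f^n(U)$ meets $Per(f)$. Note that $V$ is open, because $f$ is a local diffeomorphism and hence an open map, and that it is forward invariant ($f(V)\subseteq V$); this reformulation is dual to Proposition \ref{backtrans}.

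First I would dispose of the case $\Omega(f)=M$: then Proposition \ref{clperf=omegaf} gives $\overline{Per(f)}=M$, so $Per(f)$ is already dense, and since a periodic point maps to a periodic point we have $Per(f)\subseteq f^{-1}(Per(f))\subseteq\bigcup_{n}f^{-n}(Per(f))$, whence the conclusion is immediate. The real content is to place a periodic point in $V$ without a dense orbit at hand. For this I would pick $x\in U$ and consider its $\omega$-limit set $\omega(x)$, which is nonempty by compactness of $M$ and satisfies $\omega(x)\subseteq\Omega(f)=\overline{Per(f)}$ by Proposition \ref{clperf=omegaf}. Fixing $y\in\omega(x)$, the forward orbit of $x$ returns arbitrarily close to $y$ infinitely often, so there are times $n_k<n_j$ with $z:=f^{n_k}(x)$ and $f^{n_j}(x)$ both arbitrarily close to $y$, and hence $f^{n_j-n_k}(z)$ arbitrarily close to $z$. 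I would then invoke the Anosov closing lemma---available through Przytycki's inverse-limit description together with the local product structure of Proposition \ref{prp lps for endos}---to shadow this almost-periodic orbit segment by a genuine periodic point $p$ with $\textup{d}(p,z)$ as small as we please. Since $z\in f^{n_k}(U)$ and this set is open, once the return is close enough $p$ lies in $f^{n_k}(U)\subseteq V$, giving $p\in Per(f)\cap V$ and therefore a preimage of the periodic point $p$ inside $U$.

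The hard part will be exactly the final step: forcing $p$ to fall inside the open set $f^{n_k}(U)$. Under iteration $f^{n_k}(U)$ is expanded along the unstable directions but contracted along the stable ones, so it tends to be long and thin, and a periodic point produced near $z$ may be displaced out of it along the stable direction. Controlling this requires the hyperbolic geometry---the local product structure of Proposition \ref{prp lps for endos} and the sheet-separation estimates $d_{r+1}=d_r/k$ for the covers $(f^n,M)$---rather than topology alone, since by Proposition \ref{density in expandings} unconditional density of preimages is an expanding phenomenon that fails for genuine Anosov endomorphisms (cf. the counterexample announced in the introduction). Thus the quantitative estimate ensuring $\textup{d}(p,z)<\textup{d}(z,\partial f^{n_k}(U))$, uniformly enough to feed the closing lemma, is the crux of the proof.
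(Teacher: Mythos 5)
Your reformulation and your first case are sound, and in fact your first case already \emph{is} the paper's entire proof: the paper simply asserts $\overline{Per(f)}=M$ (which, in view of Przytycki's Proposition~\ref{clperf=omegaf} $\overline{Per(f)}=\Omega(f)$, amounts to taking $\Omega(f)=M$ for granted) and then applies Lemma~\ref{densityofpreimagesofadense}; as you observe, even that lemma is unnecessary here, since $Per(f)\subseteq f^{-1}(Per(f))\subseteq\bigcup_{n}f^{-n}(Per(f))$, so density of $Per(f)$ gives the conclusion outright. So where you coincide with the paper you are correct, and slightly cleaner.

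The genuine gap is the step you yourself flag as the crux, and it is not a technicality that sharper estimates could repair; it is an obstruction. The closing lemma only ever produces periodic points, and periodic points lie in $\Omega(f)$. Your second case is needed precisely when $\Omega(f)\neq M$, and in that situation one may take $U$ to be a nonempty open set disjoint from the closed set $\Omega(f)$. Nothing in your argument uses that the degree of $f$ exceeds one; but if $f$ is invertible, $\Omega(f)$ is fully invariant, so $f^{n}(U)\cap\Omega(f)=\emptyset$ for every $n$, hence $f^{n_k}(U)$ contains no periodic point at all and the inequality $\textup{d}(p,z)<\textup{d}(z,\partial f^{n_k}(U))$ fails for \emph{every} periodic $p$, however the returns are chosen. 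Thus the scheme cannot be completed as structured: any proof of the second case must use non-invertibility (the covering structure, the multiplicity of pre-images) in an essential way, which the closing-lemma argument does not; for degree one the statement of your second case is equivalent to $\Omega(f)=M$, i.e.\ to the well-known open problem of whether Anosov systems on closed manifolds have non-wandering set equal to the whole manifold. This is exactly the assertion the paper slips in without proof, so the honest comparison is: the paper's proof rests on the unproved identity $\Omega(f)=M$ by fiat, while yours isolates that assumption correctly in the first case and then attempts to remove it by an argument that provably cannot work in the stated generality.
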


\begin{proof}
Because $f$ is an Anosov endomorphism on a closed manifold $M$, we have $\overline{Per(f)}=M$. So by Lemma \ref{densityofpreimagesofadense} the set containing all the pre-images of all the periodic points is dense in $M$.
\end{proof}

Now we investigate the pre-images of an arbitrary periodic point under transitive Anosov endomorphisms. First there are some examples that the pre-images of at least some of the fixed points are not dense in the manifold;

\begin{xmp}\label{conter xmp}
Define $B:\mathbb{T}^3\rightarrow\mathbb{T}^3$ to be;
\[\begin{bmatrix}
2&0&0\\
0&2&1\\
0&1&1
\end{bmatrix} (\textup{mod}1).\]
The eigenvalues are $2$ and $\frac{3\pm\sqrt{5}}{2}$ and it is a transitive Anosov endomorphism defined by the product of doubling map over $S^1$ and $A$ in the example \ref{linear diffeo} over $\mathbb{T}^2$. For any point in $\mathbb{T}^3$, the geodesic defined by the eigenvalue $2$'s eigenvector is $S^1$ which is not dense in $M$. obviously the set of pre-images of the fixed point $(0,0,0)\in\mathbb{T}^3$ is dense in $S^1\times(0,0)$ and is not dense in $M$. This can also be stated by this;
\end{xmp}

\begin{rmk}\label{reducibility}
If there exists a factor (A semi conjugate map) $f:N\rightarrow N$ for the Anosov endomorphism $F:M\rightarrow M$ such that $f$ is an Anosov endomorphism or an expanding map and the projection of $F$ on $\frac{M}{N}$ is an Anosov diffeomorphism then the pre-images of any point $p\in M$, are distributed in $\{p\}\times N$ and if $p$ is fixed under $F$ then, like the example above, its set of pre-images is not dense in the whole manifold.

This condition in the study of rigidities in Anosov group actions is commonly called \emph{reducibility} \cite{Spatzier}. 
\end{rmk}

So it is possible for the pre-images of a fixed point (or periodic point) under an Anosov endomorphism to be dense in a non trivial subset of $M$ but in many cases they are dense in $M$;

\begin{trm}\label{maintheoremforpriodics}
Let $f:M\rightarrow M$ be a transitive Anosov endomorphism such that for every point $x\in M$ geodesics defined by eigenvectors of $Df_x$ are dense in $M$, then the periodic points have dense sets of pre-images under $f$.
\end{trm}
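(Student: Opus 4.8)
The plan is to reduce to the case of a fixed point and then to upgrade the density of the stable and unstable sets furnished by Proposition \ref{densityofmanifolds} to density of the discrete set of pre-images, the upgrade being exactly where the hypothesis on eigenvector geodesics is consumed. First I would let $p$ have period $\pi$ and set $g=f^{\pi}$. Then $g$ is again an Anosov endomorphism with the same splitting $E^s\oplus E^u$ (the constant $\lambda$ being replaced by $\lambda^{\pi}$), $p$ is fixed by $g$, and since the eigendirections of $Dg$ span the same $E^s,E^u$ the geodesic-density hypothesis persists for $g$. As $\mathrm{Per}(g)=\mathrm{Per}(f)$ and $f$ is transitive we get $\overline{\mathrm{Per}(g)}=M$, hence $\Omega(g)=M$ by Proposition \ref{clperf=omegaf}, so Proposition \ref{densityofmanifolds} applies to $g$. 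Because $\bigcup_{n}g^{-n}(p)\subseteq\bigcup_{m}f^{-m}(p)$ it suffices to treat $g$; from now on $p$ is fixed and I write $f$ for $g$.

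Next I would locate the pre-images inside the stable set: if $f^{n}(q)=p$ then $f^{j}(q)=p$ for all $j\ge n$, so $\mathrm{d}(f^{j}(q),f^{j}(p))\to0$ and $q\in W^s_p$; thus $\bigcup_{n}f^{-n}(p)\subseteq W^s_p$, and by Proposition \ref{densityofmanifolds} the stable set $W^s_p$ is dense in $M$. The real difficulty is that density of the full stable set does not by itself yield density of the zero-dimensional set $\bigcup_{n}f^{-n}(p)$: a priori the pre-images can remain confined to a proper sub-collection of leaves, which is precisely what occurs in the reducible Counterexample \ref{conter xmp}.

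To rule this out I would bring in the covering structure together with the eigenvector hypothesis. For each $n$ the cover $(f^{n},M)$ has $k^{n}$ sheets and $f^{-n}(p)$ meets each sheet once, while the minimal separation of these pre-images is $d_{n}=d_{1}/k^{n}\to0$, so they proliferate. Along the unstable directions the local inverse branches of $f^{n}$ contract by a factor at most $\lambda^{n}\to0$, so along every unstable leaf carrying pre-images of $p$ the $n$-th pre-images form a net of mesh tending to $0$, whereas transverse to the leaves the base points refine as $n$ grows. The hypothesis that the geodesics tangent to the eigendirections are dense is exactly what forces the stable and unstable leaves through these pre-images to be dense in $M$ rather than trapped in a proper subtorus; combined with the vanishing mesh this should make $\bigcup_{n}f^{-n}(p)$ be $\epsilon$-dense for every $\epsilon$. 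To make this effective in a prescribed $\epsilon$-ball $B$, I would reuse the local-product-structure argument of Proposition \ref{densityofmanifolds} (and its internal lemma): pick a periodic point in $B$, join it to $p$ by a chain of periodic points at mutual distance $<\epsilon/2$ using connectedness of $M$ and density of $\mathrm{Per}(f)$, and transport a pre-image of $p$ through the intersections $W^u_{\epsilon,p_i}\cap W^s_{\epsilon,p_{i+1}}$ supplied by the local product structure (Proposition \ref{prp lps for endos}, Definition \ref{dfn lps for endos}), pulling back by a suitable $f^{-m}$ so as to land inside $B$.

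I expect the crux to be exactly the passage from \emph{density of the stable and unstable sets} to \emph{density of the discrete pre-image set}. This is the only point at which the eigenvector-geodesic hypothesis can be used, and it must be used essentially, since Counterexample \ref{conter xmp} shows the conclusion fails without it (there the pre-images of the fixed point stay trapped on a circle). Quantifying how the pre-images spread both along and across the eigendirection leaves, and checking that the dense geodesics fill every gap uniformly as $n\to\infty$, is the delicate part; by comparison the covering and local-product bookkeeping in the other steps is routine.
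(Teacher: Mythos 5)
Your reduction to a fixed point, the inclusion $\bigcup_{n}f^{-n}(p)\subseteq W^s_p$, and the covering-space bookkeeping (the $k^n$ sheets of $(f^n,M)$ and the separation of order $d_1/k^n$ from Remark \ref{sheets of a cover}) all match the paper's setup, and you correctly locate where the eigenvector hypothesis must act. But the proposal stops precisely at the point where the theorem's content lies: the passage from density of the stable/unstable \emph{sets} to density of the countable set $\bigcup_{n}f^{-n}(p)$ is deferred as ``the delicate part'' rather than proved, so what you have is a plan, not a proof. Moreover, the one concrete mechanism you do offer for making the argument effective cannot work as stated: a point $z\in W^u_{\epsilon,p_i}\cap W^s_{\epsilon,p_{i+1}}$ obtained from the local product structure satisfies $\textup{d}(f^n(z),f^n(p_{i+1}))\rightarrow 0$ but in general $f^n(z)\neq p$ for every $n$, and the same is true of all pre-images of $z$. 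Transporting through heteroclinic intersections and pulling back by $f^{-m}$ is exactly the argument the paper uses to prove Proposition \ref{densityofmanifolds}, i.e.\ density of the pre-images of whole stable leaves; it produces points \emph{on} such leaves, never elements of the discrete set $\bigcup_{n}f^{-n}(p)$, which is why the theorem requires a separate argument in the first place.

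The paper closes this gap with a construction absent from your proposal. It defines, for the points of $f^{-n}(p)$, the nearest-neighbor distances $\alpha_{-n}$ and their maximum $\beta_{-n}$, shows $\beta_{-n}\leq\beta_{-1}/k^n\rightarrow 0$, joins nearest pre-images by geodesics to build a connected ``skeleton'' $\xi^{-n}(p)$ in which $f^{-n}(p)$ is $\beta_{-n}$-dense, and passes to a limit set $\xi_p$ in which the pre-images are dense. The hypothesis on eigen-geodesics is then used twice: first in a non-trapping argument --- if some $W^i_p$ with $W^i_{p,\delta}\subset W^u_{p,\delta}$ were not dense, all pre-images of $p$ would stay in a nowhere-dense set $L$ with $f^{-n}(L)\subset L$, exactly the reducible situation of Example \ref{conter xmp} --- and second in a projection step, where $\xi_{p,\delta}$ is projected onto the eigen-geodesics $W^i_p$ and their density in $M$ is converted into density of $\lim_{m\rightarrow\infty}f^{-m}(\xi_p)$, hence of the pre-image set. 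Your intuition that ``the pre-images form a net of mesh tending to $0$ along unstable leaves, and the leaves are dense'' is the right one, but turning it into a proof requires precisely this skeleton-plus-projection construction (or an equivalent), and that is the part your proposal leaves unproven.
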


\begin{proof}
Without any loss of generality, let $p\in M$ be a fixed point and $k$ to be the degree of $f$ and $\{x(i)\in S(i)|i\in\{1,\dots,k\}\}=f^{-1}(p)$. Then define for each $x(i)\in f^{-1}$;
\[\alpha_{-1}(x(i)):=\min_{x(j)\in f^{-1}(p)}(\textup{d}(x(i),x(j))) \]
and;
\[\beta_{-1}:=\max_{x(i)\in f^{-1}(p)}(\alpha_{-1}(x(i))). \]
$\beta_{-1}$ is the maximum distance possible between a point in $f^{-1}(p)$ and its nearest point in $f^{-1}(p)$ that is not equal to the first one. Then for every $n\in\mathbb{N}$ define $\beta_{-n}$ for the points in $f^{-n}(p)$ in the same way. $(f^n,M)$s are covers for $M$ and since $M$ is a closed manifold, as $n$ gets bigger the volume of each sheet of the cover gets smaller accordingly and so the distance between the pre-images of each point gets smaller (See remark \ref{sheets of a cover}). Hence for every point $x(i_1,\dots,i_n)$ in $f^{-n}(p)=\{x(i_1,\dots,i_n)\in S(i_1,\dots,i_n)|i_1,\dots,i_n\in\{1,\dots,k\} \}$;
\[\alpha_{-n}(x(i_1,\dots,i_n))=\min_{x(j_1,\dots,j_n)\in f^{-n}(p)}(\textup{d}(x(i_1,\dots,i_n),x(j_1,\dots,j_n)))\leq\frac{\alpha_{-1}(x(i_1))}{k^n} \]
and similarly;
\[\beta_{-n}=\max_{x(i_1,\dots,i_n)\in f^{-n}(p)}(\alpha_{-n}(x(i_1,\dots,i_n)))\leq\frac{\beta_{-1}}{k^n}. \]
This means that for every $\epsilon>0$ there is $n\in\mathbb{N}$ such that $\beta_{-n}<\epsilon$.

Now connect each pair of points $x(i_1,\dots,i_n)$ and its nearest points in $\{f^{-n}(p)\}$ with geodesics by the length $\alpha_{-n}(x(i_1,\dots,i_n))$, between them. By this procedure we will have a subset of $M$ consisting of some connected components $c^n _1,\dots,c^n _{t_n}$ ($t_n\in\mathbb{N}$), in which $f^{-n}(p)$ is $\beta_{-n}$-dense. These components are disconnected because for every point in them there are other points in the same connected component with less distance than the points in other components. Now connect the components by a geodesic $c^n(i,j)$ such that $i,j\in\{1,\dots,t_n\}$, from the two points $x(i_1,\dots,i_n)\in c^n _i$ and $x(j_1,\dots,j_n)\in c^n _j$ that have the least distance. We call this set $\xi^{-n}(p)$. For every $\epsilon$ there is $m>n$ and the cover $(f^m,M)$ such that for $x(i_1,\dots,i_n)\in c^n _i$ and $x(j_1,\dots,j_n)\in c^n _j$, $\textup{d}(f^{-m}(x(i_1,\dots,i_n)),f^{-m}(x(j_1,\dots,j_n)))<\epsilon$. Now by connecting the points in $f^{-m}(p)$ by geodesics and repeating the process above, we have $\xi^{-m}(p)$ in which $f^{-m}(p)$ is $\epsilon$-dense. Thus for every $\epsilon$ there is $m\in\mathbb{N}$ and $\xi^{-m}(p)\subset M$ such that $f^{-m}(p)$ is an $\epsilon$-dense subset in $\xi^{-m}(p)$. As $m$ goes to infinity there is a subset of $M$ in which $\lim_{m\rightarrow\infty}f^{-m}(p)$ is dense;

\begin{lem}
There exists the set $\xi_{p}:=\lim_{m\rightarrow\infty}\xi^{-m}(p)$ in which $\lim_{m\rightarrow\infty}f^{-m}(p)$ is dense.
\end{lem}

\begin{proof}
Suppose the opposite, so there is $N\in\mathbb{N}$ and $\epsilon$ such that for all $n>N$ there exists a point $x\in\xi^{-n}(p)$ such that $\textup{d}(x,f^{-n}(p))>\epsilon$. Then by considering the definition of $\xi^{-n}(p)$s, for all $n>N$ there are $x(i_1,\dots,i_n)$ and $x(j_1,\dots,j_n)$ in $f^{-n}(p)$ so that for every two trajectories $(x_{-m})_{m\in\mathbb{N}}$ and $(y_{-m})_{m\in\mathbb{N}}$ such that $x_{-j}$ and $y_{-j}$ respectively are in $f^{-m}(x(i_1,\dots,i_n))$ and $f^{-m}(x(i_1,\dots,i_n))$;
\[\lim_{m\rightarrow\infty}\textup{d}((x_{-m})_{m\in\mathbb{N}},(y_{-m})_{m\in\mathbb{N}})\neq0\]
which by remark \ref{sheets of a cover} is a contradiction with the definition of $x(i_1,\dots,i_n)$s.
\end{proof}

Now let $D_\delta(p)$ be a $\delta$-disc around $p$ where $\delta<\tau$ in definition \ref{dfn lps for endos} and let $V_i$s be the eigenvectors of $df_p$ and $W^i _p$s be the geodesics defined by $V_i$s. Also $W^i _{p,\delta}:=W^i _p\cap D_\delta (p)$. Let $(x_{j})$ be a trajectory of $p$, in the pre-images of $f$, the contraction is on the $W^i _{x_{j}}$s where $W^i _{p,\delta}\subset W^u _{p,\delta}$ and the sheets of the covers $(f^n,M)$ are made because of that contraction. Also $f^{-n}(p)\subset f^{-n}(W^u _{p,\delta})$. So if there exists $r_1,r_2,\dots,r_l$ such that $W^i _{p,\delta}\subset W^u _{p,\delta}$, $i=r_1,\dots,r_l$, and $W^i _p$s are not dense in $M$ then there exists a nowhere-dense subset $L$ in $M$, that is defined in each point $y\in L$ by parallel translation of $V^i _p$s then $f$ is an expansion on $L$ and for all $n\in\mathbb{N}$, $f^{-n}(L)\subset L$ so if $p\in L$ then all the pre-images of $p$ remain in $L$ hence they are not dense in $M$. So those $W^i _p$s that $W^i _{p,\delta}\subset W^u _{p,\delta}$
have to be dense in $M$.

The procedure above is the geometric counterpart of irreducibility because following that there cannot be any non-trivial endomorphism factor for the Anosov endomorphism (See remark \ref{reducibility} and example \ref{conter xmp}). 

Now following the proof, let $\xi_{p,\delta}:=\xi_p\cap D_\delta(p)$ and suppose that each $W^i _p$ such that $W^i _{p,\delta}\subset W^s _{p,\delta} $, is dense in $M$ (notice that by this, also $W^i _x$ for every $x\in\cup_{n\in\mathbb{N}}f^{-n}(p)$, is dense in $M$), and define $\xi_{p,\delta} ^i$s by canonical projections $\pi_i(\xi_{p,\delta})\rightarrow W^i _p$ in which $W^i _{p,\delta}\subset W^s _{p,\delta}$. Then for every $W^i _p$, $\lim_{m\rightarrow\infty}f^{-m}(\xi_{p,\delta} ^i)$ is dense in it and since each $W^i _p$ is dense in $M$, $\lim_{m\rightarrow\infty}f^{-n}(\xi_{p,\delta} ^i)$ is dense in $M$. Thus $\lim_{m\rightarrow\infty}f^{-m}(\xi_{p})$ is dense in $M$ and therefore the set of pre-images of $p$ is dense in $M$.
\end{proof}

Due to the linear Anosov endomorphisms being transitive, the proposition above gives us;

\begin{crl}\label{preimagesoflinearperiodics}
Let $A:\mathbb{T}^n\rightarrow\mathbb{T}^n$ be a linear Anosov endomorphism where $A$ is an Anosov endomorphism of degree greater than one and eigenvectors of $A$ define dense geodesics in $\mathbb{T}^n$ then the set of pre-images of a periodic point is dense in $\mathbb{T}^n$.
\end{crl}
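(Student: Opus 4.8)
The plan is to obtain this corollary as a direct specialization of Theorem \ref{maintheoremforpriodics}: the only real work is to verify that a linear Anosov endomorphism $A:\mathbb{T}^n\to\mathbb{T}^n$ of degree greater than one, whose eigenvectors define dense geodesics, satisfies every hypothesis of that theorem. Once the hypotheses are matched, the conclusion is immediate.

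First I would exploit linearity to simplify the eigenvector condition. Since $A$ descends from a linear map of $\mathbb{R}^n$, its derivative is constant: under the canonical identification $T_x\mathbb{T}^n\cong\mathbb{R}^n$ we have $DA_x=A$ for every $x\in\mathbb{T}^n$. Hence the eigenvectors of $DA_x$ coincide, at every point, with the eigenvectors of the matrix $A$, and the geodesics they determine are the images in $\mathbb{T}^n$ of the corresponding eigen-lines. The standing hypothesis that the eigenvectors of $A$ define dense geodesics therefore furnishes exactly the pointwise density requirement ``for every $x\in M$ the geodesics defined by eigenvectors of $Df_x$ are dense in $M$'' demanded by Theorem \ref{maintheoremforpriodics}. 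In other words, linearity collapses the ``for every point $x$'' condition into a single condition on the matrix $A$.

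Next I would record that $A$ is transitive. Here I would invoke that a hyperbolic linear toral endomorphism is transitive, the endomorphism analogue of the automorphism statement recalled earlier in the paper; a self-contained justification can be given either by a Fourier/character computation showing that $A$ is topologically mixing, or by combining the density of periodic points (Proposition \ref{clperf=omegaf}) with the local product structure to produce a point with dense orbit. With transitivity and the derivative computation in hand, $A$ meets all hypotheses of Theorem \ref{maintheoremforpriodics}; moreover, since the degree exceeds one, each periodic point genuinely has more than one pre-image, so the statement is non-vacuous. Applying the theorem then yields that every periodic point of $A$ has a dense set of pre-images in $\mathbb{T}^n$.

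I expect the main obstacle to be the transitivity step rather than the derivative bookkeeping: for a non-invertible toral endomorphism one cannot simply quote the classical automorphism result, so transitivity must be justified by an explicit equidistribution or mixing argument, or by reference to the structure theory of linear Anosov endomorphisms. Everything else is a routine verification, because linearity makes the eigenvector hypothesis of the theorem reduce to the single hypothesis already assumed on $A$.
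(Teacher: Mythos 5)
Your proposal is correct and follows essentially the same route as the paper: the paper derives Corollary \ref{preimagesoflinearperiodics} directly from Theorem \ref{maintheoremforpriodics} by observing that linear Anosov endomorphisms are transitive (and, implicitly, that linearity makes $DA_x=A$ at every point, so the pointwise eigenvector-geodesic hypothesis reduces to the single hypothesis on $A$). Your extra discussion of how to justify transitivity (mixing via Fourier analysis, or periodic density plus local product structure) is more careful than the paper, which simply asserts this fact, but it does not change the structure of the argument.
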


\begin{prp}\label{resultforws}
Let $f:M\rightarrow M$ be an Anosov endomorphism, if the set of pre-images of a point $p\in M$ under $f$, is dense in $M$ then the points in $W^s (p)$ and $W^u (p)$ have dense sets of pre-images under $f$.
\end{prp}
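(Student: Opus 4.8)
The plan is to reduce the statement, via the density characterization of transitivity (Proposition \ref{trnstvtequal}), to the following local task: for every nonempty open set $U\subseteq M$ and every $q\in W^s(p)\cup W^u(p)$, produce an integer $m$ and a pre-image of $q$ under $f^m$ lying in $U$. Throughout I would exploit that each $f^m$ is a covering map whose sheets $S(i_1,\dots,i_m)$ map homeomorphically onto $M$ (see the discussion preceding Proposition \ref{backtrans} and Remark \ref{sheets of a cover}); thus for each sheet $S$ the inverse branch $(f^m|_S)^{-1}\colon M\to S$ is a homeomorphism, and $(f^m|_S)^{-1}(y)$ is a genuine, unambiguous pre-image of $y$. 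This is what lets me sidestep the non-injectivity of $f$: I never try to \emph{cancel} a forward iterate, I only apply explicitly chosen inverse branches.

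First I would treat the unstable case, which is the clean one. Suppose $q\in W^u(p)$ lies on the local unstable leaf of $p$, so that $p$ and $q$ are joined by a short unstable arc of length $\ell$. Fix $U$ and a point $u_0$ in its interior. Since the pre-images of $p$ are dense and cannot all sit at bounded depth, I can choose a pre-image $\tilde p=(f^m|_S)^{-1}(p)\in U$ with $m$ as large as I wish and $\tilde p$ deep inside $U$. Applying the \emph{same} inverse branch to $q$ gives $\tilde q:=(f^m|_S)^{-1}(q)\in f^{-m}(q)$. Because $f$ expands the unstable direction by at least $\lambda^{-1}$, each inverse branch contracts it by at most $\lambda$, so $\tilde q$ is within $C\lambda^m\ell$ of $\tilde p$; choosing $m$ large makes this smaller than the distance from $\tilde p$ to $\partial U$, whence $\tilde q\in U$. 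The general case $q\in W^u(p)$ then follows by writing $q=f^{j}(q')$ with $q'$ on a local unstable leaf and noting $f^{-k}(q')\subseteq f^{-(k+j)}(q)$, so density of the pre-images of $q'$ forces density of those of $q$.

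For the stable case I would argue dually. The obstruction to repeating the above verbatim is that inverse branches \emph{expand} the stable direction, so applying a fixed deep inverse branch to a point $q\in W^s(p)$ may push it far from $\tilde p$. The remedy I plan to use is the local product structure of Proposition \ref{prp lps for endos} together with Przytycki's inverse-limit description: lifting $f$ to the natural extension $\hat f$, which is a homeomorphism, interchanges the two directions under time reversal, so the stable sets of $f$ become unstable sets for $\hat f$ and the contraction argument applies there; projecting back down yields density of the pre-images for $q\in W^s(p)$. Alternatively, staying in $M$, I would use Definition \ref{dfn lps for endos} to intersect the local unstable leaf through a dense pre-image $\tilde p$ with the appropriate stable leaf and transport $q$ to a nearby pre-image, tuning the depth $m$ against the stable expansion rate so the displacement stays inside $U$.

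The step I expect to be the real obstacle is globalizing off the local leaf: both reductions lead to a base point that is a backward iterate $p_{-j}$ of $p$ (the root of a sub-tree of pre-images), and density of the pre-images of $p$ does not formally descend to an arbitrary such root, since a finite union being dense need not make every summand dense. To close this gap I plan to invoke transitivity through Proposition \ref{backtrans} together with $\overline{Per(f)}=\Omega(f)$ (Proposition \ref{clperf=omegaf}) — using density of periodic points to select the shadowing backward trajectory so that its iterates retain dense pre-images — and to apply the local estimate at \emph{every} scale rather than a single one, exactly as in the proof of Proposition \ref{densityofmanifolds}. Making this uniform control of the inverse branches along the expanding direction precise is the crux of the argument.
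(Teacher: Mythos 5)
Your unstable-case estimate --- the backbone of the only case you actually argue in detail --- is false for general Anosov endomorphisms. You claim that because $f$ expands the unstable direction, ``each inverse branch contracts it by at most $\lambda$,'' and you apply this to the branch $(f^m|_S)^{-1}$ chosen solely so that $\tilde p=(f^m|_S)^{-1}(p)$ lands in $U$. But for a non-invertible Anosov endomorphism the unstable direction at a point is not canonical: it depends on the backward trajectory, which is exactly the non-uniqueness of unstable manifolds recalled in the paper's introduction with the citation \cite{MT}. The arc joining $p$ to $q$ is unstable for one particular prehistory of $p$, while the inverse branch you chose determines, in general, a different backward path. Decomposing the tangent vectors of the arc in the splitting $E^s\oplus E^u(\eta)$ adapted to your branch, they have a nonzero $E^s$-component whenever the two prehistories give distinct unstable directions at $p$; since every inverse branch expands $E^s$ at rate $\lambda^{-1}$ per step, the displacement $\textup{d}(\tilde q,\tilde p)$ then grows like $\lambda^{-m}$ instead of shrinking like $\lambda^{m}$, and $\tilde q$ has no reason to stay in $U$. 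Your estimate is valid only when all prehistories share one unstable direction (e.g.\ linear toral endomorphisms), and choosing instead the branch compatible with the defining prehistory would force $\tilde p$ to lie on that single backward trajectory of $p$, whose density you do not know --- which is precisely the sub-tree obstruction you flag at the end. The same objection undercuts your ``alternative'' stable argument via local product structure, since it again transports $q$ along an arbitrarily chosen deep branch.

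The remaining parts are plans rather than proofs, and you say so yourself: the stable case is delegated to the natural extension (where every point has a unique pre-image, so the pre-image tree --- the very object whose density is at issue --- has no evident counterpart, and ``projecting back down yields density'' is not an argument), and the globalization step is declared ``the crux'' and left open. Note also that you invoke Proposition \ref{backtrans}, whose hypothesis is transitivity, which is not assumed in the statement and does not obviously follow from density of the pre-images of the single point $p$. For comparison, the paper's own proof is a short, soft argument with no inverse-branch estimates at all: for $x\in W^s(p)$ it passes to $\omega(x)$ and the forward orbit $O_x$ and applies Lemma \ref{densityofpreimagesofadense} (pre-images of a dense set form a dense set), and for $x\in W^u(p)$ it uses the $\alpha$-limit along the defining trajectory; whatever one thinks of the rigor of those steps, that route never needs the contraction claim on which your proposal rests.
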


\begin{proof}
For all $x\in W^s _p$ ($p\in M$), $O_p\in\omega (x)$. So if $O_p$ has a dense set of pre-images in $M$ then the set of pre-images of $\omega(x)$ is dense in $M$. Since $\omega(x)$ and following that $O_x$ are dense in $M$, then by lemma \ref{densityofpreimagesofadense}, the pre-images of $O_x$ is dense in $M$. Hence the set of pre-images of $x$ is dense in the manifold. If $x\in W^u _p$, $O_p\in\alpha (x)$ and clearly if $O_p$ is dense or its set of pre-images is dense in $M$ then $x$ has a dense set of pre-images.
\end{proof}

\begin{prp}\label{fornonperiodicnongeneric}
Let $f:M\rightarrow M$ be a transitive Anosov endomorphism of degree greater than one and for every point $x\in M$ geodesics defined by eigenvectors of $Df_x$ are dense in $M$. Every point which is not periodic or its $\omega$-limit set does not have a dense set of pre-images in $M$, has a dense set of pre-images.
\end{prp}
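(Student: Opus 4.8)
The plan is to prove the stronger statement that the set $G$ of points of $M$ admitting a dense set of pre-images is all of $M$, and to do so by propagating the density of pre-images outward from the periodic points along stable and unstable sets. First I would record the two engines of the argument. By Theorem \ref{maintheoremforpriodics} every periodic point lies in $G$, and by Proposition \ref{resultforws} the set $G$ is saturated under stable and unstable sets: if $q\in G$ then $W^s_q\cup W^u_q\subseteq G$. I would then fix a periodic point $p$, so that $p\in G$, and invoke the discussion preceding Proposition \ref{densityofmanifolds} together with \ref{densityofmanifolds} itself to conclude that, transitivity and the local product structure being in force, the unstable set $W^u_p$ is dense in $M$.

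With these in hand, let $x$ be any non-periodic point; the case of a point with dense forward orbit is already settled by Proposition \ref{densityofpreimagesofadenseorbit}, so this is the only remaining case, and it covers both alternatives named in the statement. The idea is to reach $x$ from $p$ in a single local-product hop. Choosing $\epsilon<\tau$ as in Definition \ref{dfn lps for endos}, the density of $W^u_p$ forces some local unstable plaque of $W^u_p$ to come within the local-product distance $\delta$ of $x$; Proposition \ref{prp lps for endos} then provides a unique intersection point $z$ with $z\in W^u_p$ and $z\in W^s_{\epsilon,x}$. Since $z\in W^u_p\subseteq G$ by Proposition \ref{resultforws}, the point $z$ has a dense set of pre-images. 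Because $z\in W^s_{\epsilon,x}$ the forward orbits of $x$ and $z$ are mutually asymptotic, so $x\in W^s_z$, and a second application of Proposition \ref{resultforws}, this time to $z\in G$, gives $x\in G$. As $x$ was arbitrary, every point of $M$ has a dense set of pre-images.

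The hard part will be making the single hop rigorous: I must guarantee that the intersection point $z\in W^u_p\cap W^s_{\epsilon,x}$ genuinely exists and that the membership $z\in W^s_{\epsilon,x}$ legitimately yields $x\in W^s_z$. The existence of $z$ rests on $W^u_p$ being honestly dense in $M$ rather than confined to a proper invariant subset, and this is precisely the point at which the hypothesis that the geodesics defined by the eigenvectors of $Df_x$ are dense becomes indispensable: absent this condition the unstable directions may be trapped in a nowhere-dense invariant set, exactly the reducible behaviour of Remark \ref{reducibility} realised in Example \ref{conter xmp}, and the propagation of density would stall inside that subset. I would therefore argue the density of $W^u_p$ and the uniqueness of local unstable manifolds (guaranteed by $\epsilon<\tau$) in tandem, and verify the symmetry of the local stable relation, so that Proposition \ref{resultforws} may be applied in both directions along the hop.
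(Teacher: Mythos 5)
Your proof is correct at the paper's level of rigor, but it takes a genuinely different route. The paper argues by a case split on $\omega(x)$: if $\textup{int}(\omega(x))\neq\emptyset$ it uses backward transitivity (Proposition \ref{backtrans}) to make the pre-images of $\omega(x)$ dense and then transfers this to $x$ by Proposition \ref{resultforws}; if $\textup{int}(\omega(x))=\emptyset$ it re-runs the geodesic/$\xi$-set construction of Theorem \ref{maintheoremforpriodics} with $\omega(x)$ in place of the fixed point. You instead give a single, unified argument with no case analysis: seed the density at one periodic point $p$ via Theorem \ref{maintheoremforpriodics}, spread it over the unstable set $W^u_p$ (density of $W^u_p$ from Proposition \ref{densityofmanifolds}, spreading from Proposition \ref{resultforws}), and reach an arbitrary $x$ by one local-product hop $z\in W^u_p\cap W^s_{\epsilon,x}$, applying Proposition \ref{resultforws} a second time along $W^s_z$. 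What your route buys: it proves the stronger conclusion (Theorem \ref{resultfortransitiveendos}, density of pre-images for \emph{every} point) in one stroke, and it replaces the paper's vaguest step --- ``by a procedure like in the proof of Theorem \ref{maintheoremforpriodics}'' applied to $\omega(x)$ --- with concrete citations of previously established results; the cost is that you must invoke the density of unstable sets and the local product structure, neither of which the paper's own proof of this proposition uses.

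One caveat about your closing paragraph: you locate the use of the geodesic hypothesis in the density of $W^u_p$, and that is a misdiagnosis. In Example \ref{conter xmp} the unstable set of the fixed point $(0,0,0)$ is $S^1\times W^u_A((0,0))$, which \emph{is} dense in $\mathbb{T}^3$ --- density of unstable sets requires only transitivity, per Proposition \ref{densityofmanifolds} and Przytycki --- while what fails there is the density of the pre-images of the fixed point itself, i.e. the seed supplied by Theorem \ref{maintheoremforpriodics}. Since your chain of citations does invoke Theorem \ref{maintheoremforpriodics} for the seed, your proof stands as written; but the hypothesis is indispensable at that step, not for the existence of the hop point $z$.
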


\begin{proof}
Suppose that $x\in M$ is a non-periodic point that also does not have a dense orbit. For these points we consider $\omega (x)$. If $int(\omega (x))\neq\emptyset$ then by proposition \ref{backtrans}, $\cup_{n\in\mathbb{N}}f^{-n}(\omega(x))$ is dense in $M$ and by proposition \ref{resultforws} the set of pre-images of $x$ is dense in $M$. If $int(\omega (x))=\emptyset$, by a procedure like in the Proof of Theorem \ref{maintheoremforpriodics} and considering the pre-images of $\omega(x)$ instead of the pre-images of a fixed point, and again by proposition \ref{resultforws} the set of pre-images of $x$ is dense in $M$.
\end{proof}

To sum it up, by propositions \ref{densityofpreimagesofadenseorbit}, \ref{resultforws} and \ref{fornonperiodicnongeneric} and theorem \ref{maintheoremforpriodics} we have;

\begin{trm}\label{resultfortransitiveendos}
Let $f:M\rightarrow M$ be a transitive Anosov endomorphism of degree greater than one and for every point $x\in M$ geodesics defined by eigenvectors of $Df_x$ are dense in $M$, then for every point, the set of pre-images is dense in the manifold.
\end{trm}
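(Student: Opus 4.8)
The plan is to prove this by an exhaustive case analysis on the dynamical type of an arbitrary point, since the statement is essentially a synthesis of results already established. First I would fix an arbitrary point $x \in M$ and note the dichotomy: $x$ is either periodic or non-periodic, and these two cases cover all of $M$.

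In the periodic case I would invoke Theorem \ref{maintheoremforpriodics} directly. Its hypotheses --- that $f$ is a transitive Anosov endomorphism and that for every point the geodesics defined by the eigenvectors of $Df_x$ are dense in $M$ --- are exactly those assumed here, so every periodic point has a dense set of pre-images. In the non-periodic case I would appeal to Proposition \ref{fornonperiodicnongeneric}, whose hypotheses again coincide with the present ones (including degree greater than one). That proposition internally resolves the two subcases $\mathrm{int}(\omega(x)) \neq \emptyset$ --- handled through Proposition \ref{backtrans} together with Proposition \ref{resultforws} --- and $\mathrm{int}(\omega(x)) = \emptyset$ --- handled by running the nested-sheet construction of Theorem \ref{maintheoremforpriodics} on $\omega(x)$ in place of a fixed point. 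The dense-orbit situation of Proposition \ref{densityofpreimagesofadenseorbit} is subsumed, since a point with a dense orbit is automatically non-periodic.

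Combining the two cases, every point of $M$ has a dense set of pre-images, which is the assertion. I do not expect a genuine obstacle in this deduction itself: the only verification required is that the hypotheses here match those of Theorem \ref{maintheoremforpriodics} and Proposition \ref{fornonperiodicnongeneric} verbatim, and that the periodic/non-periodic split is exhaustive. All the real difficulty --- the geodesic-density condition ruling out a reducible nowhere-dense invariant lamination $L$, and the metric control of the nested sheets $S(i_1,\dots,i_n)$ via the estimate $\beta_{-n} \le \beta_{-1}/k^{n}$ --- lives in those earlier results rather than in the present argument.
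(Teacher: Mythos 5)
Your proposal is correct and is essentially identical to the paper's own proof, which likewise obtains the theorem as a pure synthesis of Theorem \ref{maintheoremforpriodics}, Proposition \ref{fornonperiodicnongeneric}, Proposition \ref{resultforws}, and Proposition \ref{densityofpreimagesofadenseorbit}. The one small bookkeeping difference is that you declare the dense-orbit case ``subsumed'' by Proposition \ref{fornonperiodicnongeneric}, while the proof of that proposition explicitly assumes the point does \emph{not} have a dense orbit, which is why the paper cites Proposition \ref{densityofpreimagesofadenseorbit} separately for that case --- harmless, since that case is covered either way.
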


According to this and the proof of theorem \ref{maintheoremforpriodics}, for product manifolds we also have;

\begin{crl}\label{product of them}
Let $f:M\rightarrow M$ and $g:N\rightarrow N$ be transitive Anosov endomorphisms such that the pre-images of any point in $M$ and any point in $N$ respectively under $f$ and $g$ are dense in $M$ and $N$, then the pre-images of any point in $M\times N$ under $(f,g)$ is dense in $M\times N$. 
\end{crl}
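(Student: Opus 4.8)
The plan is to push everything back to the two factor maps, the one genuine difficulty being that a pre-image of $(p,q)$ under $F:=(f,g)$ must use the \emph{same} iterate in both coordinates. Since $F^{n}=(f^{n},g^{n})$, for every $n\in\mathbb{N}$ one has
\[
F^{-n}(p,q)=f^{-n}(p)\times g^{-n}(q),
\]
so that $\bigcup_{n\in\mathbb{N}}F^{-n}(p,q)=\bigcup_{n\in\mathbb{N}}\bigl(f^{-n}(p)\times g^{-n}(q)\bigr)$. As the products $U\times V$ of open sets $U\subset M$, $V\subset N$ form a basis of $M\times N$, it suffices to show that for each such $U\times V$ there is a \emph{single} index $n$ realizing $f^{-n}(p)\cap U\neq\emptyset$ and $g^{-n}(q)\cap V\neq\emptyset$ at once. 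The naive idea of choosing indices for $U$ and for $V$ separately fails precisely because the two indices need not coincide, and I expect this synchronization of the iterate across the two factors to be the \textbf{main obstacle}. Note in particular that one cannot simply invoke Theorem \ref{resultfortransitiveendos} for $F$: the product of two transitive maps need not be transitive, and the eigenvector-geodesics of $DF_{(x,y)}$ lie in $M\times\{y\}$ or $\{x\}\times N$ and so are never dense in $M\times N$, which is exactly why this case is separated out in Theorem \ref{theresultiff}.

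The cleanest rigorous core is the case where $(p,q)$ is a \emph{fixed} point of $F$. If $f(p)=p$ then $a\in f^{-n}(p)$ gives $f^{n+1}(a)=f(p)=p$, so $f^{-n}(p)\subseteq f^{-(n+1)}(p)$, and likewise $g^{-n}(q)\subseteq g^{-(n+1)}(q)$; both unions of pre-images are therefore \emph{increasing}. For an increasing union, density is equivalent to meeting every open set from some stage on, so the hypotheses that $\bigcup_n f^{-n}(p)$ is dense in $M$ and $\bigcup_n g^{-n}(q)$ is dense in $N$ yield thresholds $n_0(U)$ and $n_0(V)$ such that $f^{-n}(p)\cap U\neq\emptyset$ for all $n\ge n_0(U)$ and $g^{-n}(q)\cap V\neq\emptyset$ for all $n\ge n_0(V)$. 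Taking $n\ge\max\{n_0(U),n_0(V)\}$ produces the required common index, and hence $F^{-n}(p,q)$ meets $U\times V$; density of the pre-image set follows.

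It then remains to remove the fixedness assumption. For $(p,q)$ periodic I would pass to the common power $G:=F^{T}=(f^{T},g^{T})$ with $T$ a common multiple of the two periods, under which $(p,q)$ is fixed; since $\bigcup_m G^{-m}(p,q)\subseteq\bigcup_n F^{-n}(p,q)$, it is enough to run the increasing-union argument for $G$, the only new point being to check that the thinned unions $\bigcup_m f^{-mT}(p)$ and $\bigcup_m g^{-mT}(q)$ stay dense. For an arbitrary $(p,q)$ I would reduce to periodic (equivalently, $\omega$-limit) data by the stable-set and $\omega$-limit reasoning already used in Propositions \ref{resultforws} and \ref{fornonperiodicnongeneric}, now carried out in the product. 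In each of these reductions the decisive step is upgrading density of the \emph{union} of pre-images to a statement about cofinitely many individual levels $f^{-n}(p)$, which is what makes the indices synchronizable; this upgrade is underwritten exactly by the argument of Theorem \ref{maintheoremforpriodics}, where the sheets $S(i_1,\dots,i_n)$ of $(f^n,M)$ shrink uniformly as $n\to\infty$ (Remark \ref{sheets of a cover}), each carries one representative of $f^{-n}(p)$, and the geodesic-density condition forces these representatives to become $\epsilon$-dense uniformly in $p$. The hard part, therefore, is not the product structure itself but re-establishing this per-level density inside the reductions; once it is in hand, the product case closes by Lemma \ref{densityofpreimagesofadense} and the monotonicity argument above.
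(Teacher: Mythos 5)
Your diagnosis of the synchronization problem is correct, and it is worth noting that it applies verbatim to the paper's own proof: the paper argues in exactly the ``naive'' way you reject, choosing $y_1\in f^{-n_1}(p)\cap U$ and then $y_2\in g^{-n_2}(q)\cap V$ with no relation between $n_1$ and $n_2$, and then asserting $(y_1,y_2)\in\bigcup_n (f,g)^{-n}(p,q)$ --- a conclusion that requires a \emph{common} level $n$, which is never produced. Your treatment of the case where $(p,q)$ is a fixed point of $(f,g)$ --- monotonicity $f^{-n}(p)\subseteq f^{-(n+1)}(p)$, hence every open set is met by all sufficiently deep levels, hence a common index exists --- is complete, correct, and genuinely repairs this defect in that case. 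In this respect your argument is more careful than the one in the paper.

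However, as a proof of the corollary as stated, your proposal has real gaps beyond the fixed-point case. (i) Periodic case: if $\pi$ is the period of $p$ and $\pi\mid T$, then a point $a$ with $f^{n}(a)=p$ belongs to $\bigcup_m f^{-mT}(p)$ if and only if $\pi\mid n$; so the thinned union equals $\bigcup_{\pi\mid n}f^{-n}(p)$, which is genuinely smaller than $\bigcup_{n}f^{-n}(p)$ (the full union is $\bigcup_{j=0}^{\pi-1}\bigcup_{\pi\mid n}f^{-n}(f^{j}(p))$, the thinned preimage sets of \emph{all} points on the orbit), and its density does not follow from the corollary's hypothesis, which concerns $f$-preimages at all levels. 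Establishing it is the same thinning/synchronization problem you set out to solve, so this reduction is circular as written. (ii) General case: you cannot invoke Propositions \ref{resultforws} and \ref{fornonperiodicnongeneric} ``carried out in the product,'' because their hypotheses include the eigenvector-geodesic density condition, which, as you note yourself, always fails for $(f,g)$ on $M\times N$; applying them factor-wise instead reintroduces the unsynchronized indices. Moreover, an arbitrary point of $M\times N$ need not lie in the stable set of a periodic point, so ``reduce to periodic data'' itself requires justification. What would actually close the proof is the per-level statement you gesture at --- for every open $U$ the set of levels $n$ with $f^{-n}(p)\cap U\neq\emptyset$ is cofinal in a controlled way --- extracted from the proof of Theorem \ref{maintheoremforpriodics}; until that extraction is carried out, only the fixed-point case of the corollary is proved.
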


\begin{proof}
For every open set $U\subset M$, $V\subset N$ and for every $(p,q)\in M\times N$, since $\cup_{n\in\mathbb{N}}f^{-n}(p)$ is dense in $M$, there is $y_1\in(\cup_{n\in\mathbb{N}}f^{-n}(p))\cap U$ and in the same way, for $(y_1,q)\in\{y_1\}\times N$ there is $(y_1,y_2)\in(\cup_{n\in\mathbb{N}}(y_1,g^{-n}(q)))\cap\{y_1\}\times V$. So for any open set $U\times V$ there exists a point $(y_1,y_2)$ in $(\cup_{n\in\mathbb{N}}(f,g)^{-n}((p,q)))\cap U\times V$.
\end{proof}

For example for the product of a doubling map on $S^1$ and the map $B$ in the example \ref{xmp for linear a. endo.}, the set of pre-images of any point in $\mathbb{T}^3$ is dense in the manifold. In this way wee can define a collection of examples and non-examples by defining product spaces of expanding maps on $S^1$ and Anosov diffeomorphisms and endomorphisms on arbitrary manifolds.

But what can be said about non-transitive Anosov endomorphisms? If we consider an endomorphism $f:M\rightarrow M$, according to Lemma \ref{densityofpreimagesofadense} and proposition \ref{densityofpreimagesofadenseorbit}, first we should find subsets of $M$ in which $f$ is transitive.
In this matter by considering just forward orbits of points in $M$, we have Smale and Bowen's spectral decomposition theorem that is introduced for hyperbolic endomorphisms by Sakai. There are subsets in which there are points that their orbit is dense in those sets.\\
Denote the non-wandering set of $f$ by $\Omega$, we have;

\begin{trm}[Smale-Bowen Spectral Decomposition Theorem]
\cite{sakai} Let $f:M\rightarrow M$ be an endomorphism. $f(\Omega)=\Omega$ and $f:\Omega\rightarrow\Omega$ is an Anosov endomorphism, there is a decomposition of $\Omega$ into disjoint closed sets $P_1\cup P_2\dots\cup P_s$ such that;
\begin{itemize}
\item Each $P_i$ is $f-$invariant and $f$ restricted to $P_i$ is topologically transitive.
\item There is a decomposition of each $P_i$ into disjoint closed sets $X_{1,i}\cup X_{2,i}\cup\dots\cup X_{n_i,i}$ such that $f(X_j,i)=f(X_{j+1},i)$, for $1\leq j\leq n+1$, $f(X_{n_i,i})=(X_{1,i})$ and the map $f^{n_i}:X_{j,i}\rightarrow X_{j,i}$ is topologically mixing.  
\end{itemize} 
\end{trm}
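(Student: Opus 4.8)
The plan is to reduce the statement to Smale's classical spectral decomposition for invertible hyperbolic systems, carried out on the inverse limit, and then to push the decomposition down to $M$ through the natural projection. The obstruction to arguing directly on $M$ is that, for a non-injective $f$, unstable sets are defined only relative to a chosen trajectory and may fail to be unique (as noted in the introduction); since Smale's argument is driven entirely by how stable and unstable manifolds intersect, these objects must first be made unambiguous. I would therefore form the inverse limit $\tilde M=\{(x_i)_{i\in\mathbb Z}\,:\,f(x_i)=x_{i+1}\}$ with the shift homeomorphism $\tilde f$ and the projection $\pi:\tilde M\to M$, $\pi((x_i))=x_0$, so that $\pi\circ\tilde f=f\circ\pi$. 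On $\tilde M$ the map $\tilde f$ is a homeomorphism, every point has a genuinely unique local unstable set, and both the hyperbolic splitting and the local product structure lift; this is exactly Przytycki's inverse-limit description invoked earlier. Thus $(\tilde M,\tilde f)$ is a hyperbolic homeomorphism of a compact metric space, and $\Omega=\pi(\Omega(\tilde f))$. Because $\tilde f$ is invertible, $\Omega(\tilde f)$ is $\tilde f$-invariant, whence $f(\Omega)=f(\pi(\Omega(\tilde f)))=\pi(\tilde f(\Omega(\tilde f)))=\pi(\Omega(\tilde f))=\Omega$, giving the invariance asserted in the theorem for free.

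Write $\tilde\Omega:=\Omega(\tilde f)$. By the lifted analogue of Proposition \ref{clperf=omegaf} the periodic points of $\tilde f$ are dense in $\tilde\Omega$, and by the lifted analogue of Proposition \ref{prp lps for endos} the set $\tilde\Omega$ carries a local product structure. On this set I would run Smale's argument verbatim. Define on periodic points the relation $p\approx q$ iff $W^u(p)\pitchfork W^s(q)\neq\emptyset$ and $W^u(q)\pitchfork W^s(p)\neq\emptyset$; the inclination (lambda) lemma together with the local product structure shows that $\approx$ is an equivalence relation and that the closure of each equivalence class is simultaneously open and closed in $\tilde\Omega$. Compactness then forces only finitely many classes, whose closures $\tilde P_1,\dots,\tilde P_s$ are disjoint, closed, and partition $\tilde\Omega$; moreover $\tilde f$ permutes them, and density of periodic points together with the product structure yields topological transitivity of $\tilde f$ on each $\tilde P_i$.

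To descend, set $P_i:=\pi(\tilde P_i)$. Since $\pi$ is a continuous surjective semiconjugacy, the images $P_i$ are closed, $f$-invariant, cover $\Omega$, and $f$ is transitive on each, because transitivity and forward invariance are preserved under factor maps. The cyclic refinement is then produced inside each $\tilde P_i$ in the standard way: fixing a periodic point of period $n_i$ and letting $\tilde X_{1,i}$ be the closure of the set of points whose orbit returns near it along the times divisible by $n_i$, the sets $\tilde X_{j,i}:=\tilde f^{\,j-1}(\tilde X_{1,i})$ are cyclically permuted by $\tilde f$, and $\tilde f^{\,n_i}$ is topologically mixing on each via the local-product / specification mechanism. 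Projecting by $\pi$ and using Proposition \ref{backtrans} to keep the backward dynamics consistent with transitivity gives the decomposition $X_{1,i}\cup\dots\cup X_{n_i,i}$ with the required permutation and mixing properties.

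I expect the principal obstacle to lie precisely in the descent through $\pi$: one must verify that $\pi$ neither fuses two distinct basic pieces $\tilde P_i$ nor collapses a piece onto a proper invariant subset, and that mixing survives the quotient. Concretely, the delicate point is to show that $\pi|_{\tilde P_i}$ maps onto a genuine basic set of $f$ with the fibre structure of $\pi$ compatible with the spectral pieces; this is where the transitivity of $f$, the density of periodic points, and the even-covering property recorded in Remark \ref{sheets of a cover} must be combined most carefully.
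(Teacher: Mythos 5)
The paper does not prove this theorem at all: it is quoted verbatim from \cite{sakai} as a known result, with no argument supplied, so there is no internal proof to compare your proposal against and it must be judged on its own merits. Your inverse-limit reduction to Smale's classical spectral decomposition is a legitimate and standard strategy (it is exactly the framework Przytycki uses elsewhere in the paper's references), but as written it has two genuine gaps. The first is the identity $\Omega(f)=\pi(\Omega(\tilde f))$, on which both your derivation of $f(\Omega)=\Omega$ and the covering property $\bigcup_i P_i=\Omega$ rest: you assert it, but for a general non-invertible map only the inclusion $\pi(\Omega(\tilde f))\subseteq\Omega(f)$ is automatic, and the reverse inclusion can fail. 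Here it can be repaired using Proposition \ref{clperf=omegaf}: every $f$-periodic orbit lifts to an $\tilde f$-periodic point, so $Per(f)\subseteq\pi(Per(\tilde f))\subseteq\pi(\Omega(\tilde f))$, and the latter set is closed (continuous image of a compact set), whence $\Omega(f)=\overline{Per(f)}\subseteq\pi(\Omega(\tilde f))$. Without this step the reduction does not even get started, so it must be made explicit.

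The second gap is the one you correctly flag as the principal obstacle --- that $\pi$ might fuse two distinct pieces $\tilde P_i$ and $\tilde P_j$ --- but you leave it unresolved, and it admits a short proof that should be included. Suppose $\tilde x\in\tilde P_i$, $\tilde y\in\tilde P_j$ and $\pi(\tilde x)=\pi(\tilde y)=x$. All forward coordinates of $\tilde x$ and $\tilde y$ coincide, since both equal $f^n(x)$ for $n\geq 0$; hence in the inverse-limit metric $\textup{d}(\tilde f^n\tilde x,\tilde f^n\tilde y)\leq C\,2^{-n}\rightarrow 0$, i.e.\ any two lifts of the same point are forward-asymptotic. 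Therefore $\omega_{\tilde f}(\tilde x)=\omega_{\tilde f}(\tilde y)$, and this common nonempty set lies in $\tilde P_i\cap\tilde P_j$ because each $\tilde P_k$ is closed and $\tilde f$-invariant; disjointness of the $\tilde P_k$ then forces $i=j$, so the projected pieces $P_i=\pi(\tilde P_i)$ are indeed pairwise disjoint, and the same asymptotic argument handles the cyclic pieces $X_{j,i}$. By contrast, the other properties you worry about in the descent are not delicate at all: transitivity and topological mixing pass to factors automatically (pull nonempty open sets back through the surjective semiconjugacy $\pi$ and apply the corresponding property upstairs), and closedness and invariance of the images are immediate. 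With these two repairs your outline becomes a complete proof; without them, the two central assertions of the theorem (the decomposition covers $\Omega$, and its pieces are disjoint) are unproved.
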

$P_i$s ($i=1,2,\dots,s$), introduced above, are called \emph{basic sets} of $f$.

If the degree of $f$ is $k$ then there are $k$ pre-images of each $P_i$ and for every point $p\in P_i$ its set of pre-images is a subset of $\cup_{n\in\mathbb{N}} f^{-n}(P_i)$ where by the notion in remark \ref{sheets of a cover}, $f^{-n}(P_i)=\cup_{j_1,\dots,j_k}P_i(j_1,\dots,j_k)$.

If there are more than one basic sets then by considering $f|_{P_i}$s, according to Lemma \ref{densityofpreimagesofadense} and proposition \ref{densityofpreimagesofadenseorbit}, the set of points with dense set of pre-images is dense in the set of pre-images of $P_i$s $\cup_{n\in\mathbb{N}} f^{-n}(P_i)$ where $i=1,2,\dots,s$. But the set of pre-images of $P_i$ cannot be dense in $M$ because $P_i$s are $f$-invariant; if $x\in P_i$ then $O_x\in P_i$ then if $f^{-1}(x)\nsubseteq f^{-1}(P_i) $ then there is $y\in f^{-1}(x)\cap P_j$ $(j\neq i)$, so $x=f(y)\in P_j$ which is a contradiction and we have;

\begin{prp}
Let $f:M\rightarrow M$ be a hyperbolic endomorphism such that $\Omega(f)=P_1\cup P_2\cup\dots\cup P_s$ $(s>1)$ there are not any points with dense set of pre-images in $M$.
\end{prp}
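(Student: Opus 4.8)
The plan is to argue by contradiction: assume some $p\in M$ has a dense set of pre-images $\bigcup_{n\in\mathbb{N}}f^{-n}(p)$ and derive a contradiction from $s>1$. The first step is to locate $p$ inside the non-wandering set. If the pre-images of $p$ are dense, then for every open neighbourhood $U$ of $p$ there are $n\geq 1$ and $y\in U$ with $f^{n}(y)=p$; since $p\in U$ this gives $p\in f^{n}(U)\cap U$, so $p$ is non-wandering. Hence $p\in\Omega(f)=P_1\cup\dots\cup P_s$, say $p\in P_i$. This observation is what lets the argument cover \emph{every} point of $M$ and not merely the points of $\Omega(f)$.

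Next I would confine all the pre-images of $p$ to the pre-image tree of the single basic set $P_i$. Because each $P_l$ is $f$-invariant, the computation given just before the statement shows $\bigcup_{n}f^{-n}(p)\subseteq\bigcup_{n}f^{-n}(P_i)$ and, for $l\neq i$, $\bigcup_{n}f^{-n}(P_i)\cap P_l=\emptyset$: if $f^{n}(y)=p$ with $y\in P_l$ then $p=f^{n}(y)\in P_l$ by invariance, forcing $l=i$. Applying the same idea to forward images shows more, namely that the sets $\bigcup_{n}f^{-n}(P_1),\dots,\bigcup_{n}f^{-n}(P_s)$ are pairwise disjoint and that each is contained in the stable set $W^{s}(P_l)=\{y:\omega(y)\subseteq P_l\}$. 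Thus the pre-images of $p$ lie entirely inside $W^{s}(P_i)$, which is disjoint from $W^{s}(P_l)$ for every $l\neq i$.

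The main obstacle is the final step, and it is where the honest work lies: mere disjointness from the closed invariant sets $P_l$ ($l\neq i$) does \emph{not} by itself contradict density, because a basic set can be nowhere dense and a dense set may avoid a nowhere-dense set. To produce a genuine contradiction I would exhibit an \emph{open} set disjoint from $\bigcup_{n}f^{-n}(p)$. The natural candidate is the basin of an attractor: among the basic sets of the Smale--Bowen decomposition there is at least one attractor $P_{j}$ (a minimal element for the ordering induced by the spectral decomposition), and its basin $W^{s}(P_{j})$ is open and nonempty. If $i\neq j$, then $\bigcup_{n}f^{-n}(p)\subseteq W^{s}(P_i)$ misses the open set $W^{s}(P_{j})$, contradicting density; when there are several attractors one simply selects one different from $P_i$. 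The delicate case is $p\in P_i$ with $P_i$ the unique attractor, for then its basin is dense. Here I would use the covering (degree) structure of remark \ref{sheets of a cover} together with Proposition \ref{backtrans}: the remaining basic sets are saddles or repellers carrying the backward-expanding behaviour, and I would argue that a suitable isolating neighbourhood of a repelling basic set is avoided by $\bigcup_{n}f^{-n}(p)$, since no forward orbit entering $P_i$ can originate arbitrarily close to such a set without leaving a full neighbourhood of it. Turning this invariance-based separation into the avoidance of a genuine open set is the crux, and closing the unique-attractor case rigorously is the step I expect to require the most care.
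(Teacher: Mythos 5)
Your first two steps are sound and are in fact more careful than the paper's own treatment. The reduction of an arbitrary point to $p\in\Omega(f)$ (dense pre-images force $p$ to be non-wandering) is a step the paper skips entirely, and the confinement $\bigcup_{n}f^{-n}(p)\subseteq\bigcup_{n}f^{-n}(P_i)$ together with $\bigcup_{n}f^{-n}(P_i)\cap P_l=\emptyset$ for $l\neq i$ is exactly the invariance computation in the paragraph preceding the proposition. You are also right that this disjointness alone proves nothing: the paper stops precisely there and asserts that the pre-image tree of $P_i$ ``cannot be dense in $M$,'' which is a non sequitur, since the sets $P_l$ are typically nowhere dense and a dense set may well avoid them. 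So the difficulty you isolate is real, and the paper's own proof does not resolve it either.

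The genuine gap is that your plan for the crucial case --- $p$ lying in the unique attractor $P_i$ --- points in a direction that fails, and you leave it open. You propose to exhibit an isolating neighbourhood of a repelling basic set avoided by $\bigcup_{n}f^{-n}(p)$. No such neighbourhood exists: backward orbits are attracted to repellers. In the simplest instance of this case (a degree-two local diffeomorphism of $S^1$ with an attracting fixed point $p$ and an expanding invariant Cantor set $\Lambda$ as the other basic set), for any open $W$ meeting $\Lambda$, expansion gives $f^{n}(W)=S^1$ for large $n$, hence $f^{-n}(p)\cap W\neq\emptyset$; the pre-images of $p$ accumulate on \emph{every} point of $\Lambda$. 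The open set witnessing non-density must instead be found inside the basin $W^{s}(P_i)$, by proving the opposite of what you attempt: pre-images of $p$ do not accumulate at any point of the open basin. For a periodic attractor this follows from local injectivity --- if $V$ is a small trapping disc with $f(\overline{V})\subset V$ and $f|_{V}$ injective, an induction gives $f^{-n}(p)\cap V=\{p\}$, and locally uniform convergence of forward iterates on the basin then places all accumulation points of the countable tree in the complement of the basin, so the tree cannot be dense. Extending this to attractors on which $f$ itself has degree greater than one requires local maximality and the local product structure near $P_i$, and that is the missing content; in addition, your assertion that some basic set of an Axiom A \emph{endomorphism} must be an attractor with open basin is itself nontrivial and is justified neither by you nor by the paper. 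As it stands, your argument settles only the case where an attractor exists that is different from $P_i$; the remaining case, which includes the most natural examples, is not closed.
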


Thus according to theorem \ref{resultfortransitiveendos}, corollary \ref{product of them} and the proposition above we have the proof of theorem \ref{theresultiff}, our main theorem.

\section{Appendix}
Using the program MATLAB, here we have calculated and demonstrated the pre-images of the point $(0,0)\in[0,1]\times[0,1]$, respectively under $B^{5}$, $B^{10}$ and $B^{15}$ of the linear endomorphism $B=\begin{bmatrix}
3&1\\
1&1
\end{bmatrix}$ in the Example \ref{xmp for linear a. endo.}. it shows that for each $\epsilon$ there is $n$ such that $B^{-n}((0,0))$ is $\epsilon$-dense in $[0,1]\times[0,1]$. Hence the set containing all the pre-images of the point, is dense in $\mathbb{T}^2$.
\begin{center}
\includegraphics[scale=.32]{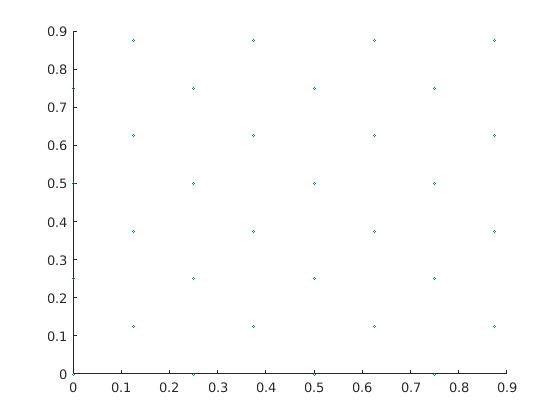}
\includegraphics[scale=.32]{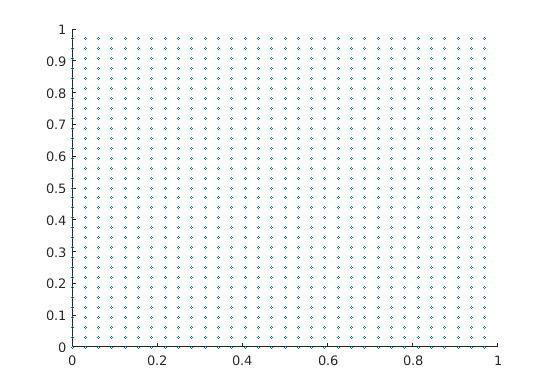}
\includegraphics[scale=.32]{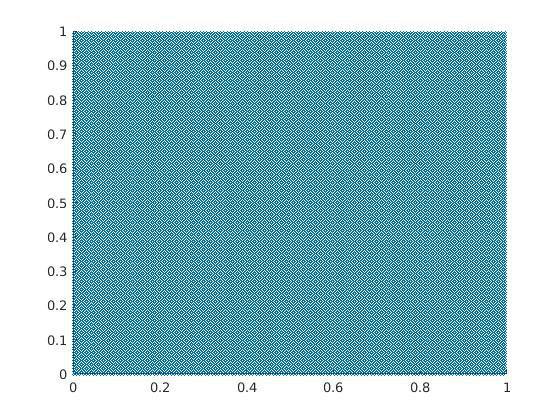}
\end{center}


\begin{thebibliography}{9}

\bibitem{stock} M. Brin, G. Stock, \emph{Introduction to Dynamical Systems}, Cambridge university press , (2003).

\bibitem{Franks} J. Franks, \emph{Anosov Diffeomorphisms, in Global Analysis}, (Proc. Sympos. Pure Math., Vol. 14,
Berkeley, Calif., 1968), Amer. Math. Soc., Providence, R.I., (1970), 61--93.

\bibitem{LPV} C. Lizana, V. Pinheiro, P. Varandas, \emph{Contribution to the Ergodic Theory of Robustly Transitive Maps}, Disc. \& Cont. Dynam. Sys., Vol. 35, No. 1, (2015).


\bibitem{lizana} C. Lizana, E. Pujalz, \emph{Robust Transitivity for Endomorphisms}, Ergod. Th. \& Dynam. Sys., (2013), 1082--1114.

\bibitem{manepugh} R. Ma\~{n}\'{e}, C. Pugh,\emph{Stability of Endomorphisms}, Warwick Dynamical Systems, (1974), 175--184.

\bibitem{MT}  F. Micena, A. Tahzibi, \emph{On the Unstable Directions and Lyupanov Exponents of Anosov Endomorphisms},  Fund. Math. 235 (2016), no. 1, 37--48.

\bibitem{przytycki}  F. Przytycki, \emph{Anosov Endomorphisms}, Studia Mathematica, (1976), 249--285.

\bibitem{sakai} K. Sakai, \emph{Anosov Maps on Closed Topological Manifolds},  J. Math. Soc. Japan 39 (1987), no. 3, 505--519.

\bibitem{Shub} M. Shub, \emph{Global Stability of Dynamical Systems} Springer-Verlag, (1987).

\bibitem{Spatzier} R. Spatzier, \emph{On the Work of Rodriguez Hertz on Rigidity in Dynamical Systems} Journal of Modern Dynamics 10, (2016), 191--207.


\bibitem{W} L. Wen, \emph{Differentiable Dynamical Systems. An Introduction to Structural Stability and Hyperbolicity}Graduate Studies in Mathematics, 173. American Mathematical Society, (2016).
\end{thebibliography}
\end{document}